\documentclass[12pt,reqno]{amsart}

\newcommand\version{October 23, 2026}
\usepackage{amsmath,amssymb,amsthm,mathtools}
\usepackage{hyperref}

\newtheorem{theorem}{Theorem}[section]
\newtheorem{proposition}[theorem]{Proposition}
\newtheorem{lemma}[theorem]{Lemma}

\theoremstyle{definition}

\theoremstyle{remark}

\numberwithin{equation}{section}

\newcommand{\R}{\mathbb R}
\newcommand{\C}{\mathbb C}
\newcommand{\Z}{\mathbb Z}
\newcommand{\N}{\mathbb N}
\newcommand{\cb}{\mathcal B}
\renewcommand{\epsilon}{\varepsilon}
\renewcommand{\phi}{\varphi}

\newcommand{\J}{\mathfrak J}
\newcommand{\one}{\mathbf{1}}
\newcommand{\me}[1]{\mathrm{e}^{#1}}
\newcommand{\supp}{\operatorname{supp}}
\newcommand{\diag}{\operatorname{diag}}
\newcommand{\cn}{\mathcal N}
\newcommand{\cm}{\mathcal M}
\newcommand{\fh}{\mathfrak H}

\def\ch{\mathcal{H}}
\def\cx{\mathcal{X}}
\DeclareMathOperator{\im}{Im}

\DeclareMathOperator{\re}{Re}
\DeclareMathOperator*{\slim}{s-lim}

\begin{document}

\title[Multiplier theorem for $(-\Delta)^{\alpha/2}-\kappa/|x|^\alpha$ --- \version]{A H\"ormander spectral multiplier theorem for \\ fractional Hardy operators}

\author[K. Merz]{Konstantin Merz}
\address[Konstantin Merz]{Institute for Theoretical Physics, ETH Zurich, Wolfgang-Pauli-Strasse 27, 8093 Zurich, Switzerland, and Institut f\"ur Analysis und Algebra, Technische Universit\"at Braunschweig, Universit\"atsplatz 2, 38106 Braun\-schweig, Germany}
\email{k.merz@tu-bs.de, k.merz@phys.ethz.ch}

\subjclass[2010]{35A23, 46E35}
\keywords{Fractional Schr\"odinger operator, Hardy inequality, spectral multiplier theorem, H\"ormander multiplier theorem}

\date{\version}
\maketitle
\begin{abstract}
  We consider the fractional Laplacian with added critical or subcritical Hardy potential, called Hardy operator. We prove a H\"ormander spectral multiplier theorem, i.e., the boundedness of sufficiently smooth multipliers of these Hardy operators on Lebesgue spaces. The range of admissible Lebesgue exponents is optimal, but the smoothness assumption is probably not. The proof combines the Helffer--Sj\"ostrand formula with a quantitative Jaffard lemma for operator-valued matrices.
\end{abstract}


\section{Introduction and main result}
\label{s:intro}


\subsection{Setting}
Let $d\in\N=\{1,2,...\}$, $\alpha\in(0,d)\cap(0,2]$, and
\begin{align}
  \kappa_{\rm c} = \kappa_{\rm c}(d,\alpha) = \frac{2^\alpha\Gamma\left(\frac{d+\alpha}{4}\right)^2}{\Gamma\left(\frac{d-\alpha}{4}\right)^2}
\end{align}
be the sharp constant in the Hardy--Kato--Herbst inequality \cite{Kato1966,Herbst1977}, for short Hardy inequality, which states
\begin{align}
  (-\Delta)^{\alpha/2} \geq \kappa_{\rm c}|x|^{-\alpha}
\end{align}
in the sense of quadratic forms on $L^2(\R^d)$; see also \cite{Kovalenkoetal1981,Yafaev1999,Franketal2008H,FrankSeiringer2008,Bogdanetal2016,BogdanMerz2024} for different proofs. For $\kappa\leq\kappa_{\rm c}$, we consider the Friedrichs extension
\begin{align}
  H = (-\Delta)^{\alpha/2}-\frac{\kappa}{|x|^\alpha} \quad \text{in} \ L^2(\R^d)
\end{align}
of the corresponding expression on $C_c^\infty(\R^d\setminus\{0\})$. When $\kappa<\kappa_{\rm c}$, then its form domain is the inhomogeneous Sobolev space $H^{\alpha/2}(\R^d)$. Moreover, $C_c^\infty(\R^d\setminus\{0\})$ is a form core \cite[Theorem~5.4]{Bogdanetal2019}.
We call $\kappa/|x|^\alpha$ the Hardy potential and $H$ the Hardy operator. If $\alpha=1$, $H$ is an important model in relativistic quantum mechanics, where it describes, e.g., electrons of a heavy atom in the vicinity of the nucleus, see, e.g., \cite{Franketal2008,Franketal2020P}, or in the study of gravitational collapse of stars, see, e.g., \cite{Chandrasekhar1931,LiebThirring1984,LiebYau1987,LiebYau1988,FrohlichLenzmann2007D}. Note that $H$ has purely absolutely continuous spectrum $[0,\infty)$, see, e.g., \cite[Remark~5.5]{IshidaMerz2026} for a complete argument. However, $H$ possesses so-called generalized ground states, i.e., non-square-integrable functions $h$ for which $Hh=0$. One important example is $h(x)=|x|^{-\delta}$, where $\delta=\delta(d,\alpha,\kappa)\in(-M,(d-\alpha)/2]$ with $M=\infty$ if $\alpha=2$ and $M=\alpha$ if $\alpha<2$ is the unique solution of $\kappa=\Phi(\delta)$ with
\begin{align}
  \left(-M,\frac{d-\alpha}{2}\right] \ni \delta \mapsto \Phi(\delta) := \frac{2^\alpha\Gamma\left(\frac{\delta+\alpha}{2}\right) \Gamma\left(\frac{d-\delta}{2}\right)}{\Gamma\left(\frac\delta2\right)\Gamma\left(\frac{d-\alpha-\delta}{2}\right)}.
\end{align}
Note that $\Phi(0)=0$, $\Phi((d-\alpha)/2)=\kappa_{\rm c}$, and $\lim_{\delta\to-M}\Phi(\delta)=-\infty$. Moreover, $\Phi(\delta)$ is strictly monotonically increasing \cite{Franketal2008H}. Since our main result merely concerns $\alpha<2$, as the corresponding result for $\alpha=2$ is well-known as we recall below, we will from now on only consider $\alpha\in(0,\min\{2,d\})$ and denote
\begin{align}
  \label{eq:defdelta}
  \delta := \Phi^{-1}(\kappa) \in \left(-\alpha,\frac{d-\alpha}{2}\right].
\end{align}



In \cite{Merz2021}, a {\em H\"ormander spectral multiplier theorem} is proved for $H$ when $\alpha<2$ and $\kappa<0$, i.e., sufficiently smooth functions $F(H)$, initially defined by the spectral theorem in $L^2$, are $L^p$-bounded for certain $p\neq2$. The main result of this paper is an extension to $0<\kappa\leq\kappa_{\rm c}$. To state it, choose a bump function $\psi\in C_c^\infty((0,\infty))$ satisfying $\sum_{k\in\mathbb Z}\psi(2^{-k}\lambda)=1$ for all $\lambda>0$, a real-valued function $\chi\in C_c^\infty((0,\infty))$ which equals one on an open neighborhood of $\supp\psi$, and define for $s\geq0$ the seminorm
\begin{align}
  \label{eq:Hs}
  \fh_s(F) := \sup_{t>0}\|\chi(\cdot)F(t\,\cdot)\|_{H^s(\R)},
\end{align}
which is, modulo an irrelevant multiplicative factor, independent of the chosen cutoffs $\psi$ and $\chi$. Here, $H^s(\R^d)$ denotes the $L^2(\R^d)$-based Sobolev space of order $s$. Defining further
\begin{align}
  \label{eq:norm}
  \mathfrak N_s(F) := \sup_{\lambda\ge0}|F(\lambda)| + \fh_s(F),
\end{align}
then, according to \cite[Theorem~2]{Merz2021}, there is $s_{d,\alpha,\kappa}>0$ such that for all bounded, measurable $F:[0,\infty)\to\C$ with $\mathfrak N_s(F)<\infty$ for any $s>s_{d,\alpha,\kappa}$, one has $\|F(H)\|_{p\to p}\lesssim \mathfrak N_s(F)$ for all $\kappa<0$ and $1<p<\infty$. This statement is classical for $\kappa=0$ and goes back to H\"ormander \cite{Hormander1960}, which improves Mikhlin's multiplier theorem \cite{Mihlin1956} where $\mathfrak N_s(F)<\infty$ is replaced by the condition
\begin{align*}
  |\lambda^{j}\partial_\lambda^j F(\lambda)| \leq c_j, \quad j=0,...,\left\lfloor\frac{d}{2}\right\rfloor+1,
\end{align*}
for positive constants $c_j$. Classical examples of H\"ormander multipliers are imaginary powers $F(\lambda)=\lambda^{i\tau}$ (which are useful, e.g., in the context of complex interpolation) and the Bochner--Riesz means $F_R^\sigma(\lambda)=(1-\lambda/R)_+^\sigma$ for $R,\sigma>0$, which satisfy $\mathfrak N_s(F_R^\sigma)<\infty$ if and only if $\sigma>s-1/2$. The latter are smoothed versions of the ball multiplier (which is $L^p(\R^d)$-bounded if and only if $p\in(1,\infty)$ when $d=1$, and $p=2$ when $d\geq2$, see \cite{Fefferman1971}); a fundamental (and still widely open) question is to characterize the optimal range of $\sigma$ such that $F_R^\sigma$ is $L^p$-bounded. Finally, multiplier theorems are vital in constructing basic Littlewood--Paley theories, in particular in proving square function estimates and Bernstein inequalities, to advance the study of nonlinear PDEs.


\subsection{Main result}

To state the main result, we introduce, for given $\delta=\delta(\kappa)\in(-\alpha,(d-\alpha)/2]$ the exponents
\begin{align}
  \label{eq:range}
  \begin{split}
    & p_- := \max\left\{1,\frac{d}{d-\delta}\right\}, \qquad
      p_+ := \frac{d}{\max\{\delta,0\}}=p_-', \\
    & p_- < p_0 < 2, \qquad q = p_0' \in (2,d/\max\{\delta,0\}).
  \end{split}
\end{align}
Here, for $p\in(1,\infty)$, we write $p'=(1-1/p)^{-1}$. If $p=1$, then $p'=\infty$ and vice versa.
Thus, $p_-=1$ and $p_+=\infty$ if and only if $\kappa\leq0$. Finally, let us fix a, not necessarily optimal, regularity parameter
\begin{align}
  \label{eq:indices}
  S_* := \left\lceil 1+\frac{2(d+\alpha)}{\min\{1,\alpha\}}\right\rceil + 5.
\end{align}

Then, our main result can be stated as follows.

\begin{theorem}
  \label{thm:main}
  Let $d\in\N$, $0<\alpha<\min\{2,d\}$, $\kappa\leq\kappa_{\rm c}$, $s>S_*$, and let $F:[0,\infty)\to\C$ be bounded and measurable with $\mathfrak N_s(F)<\infty$. Then, for all $p_0\in(p_-,2)$, we have that $F(H)$ is of weak-type $(p_0,p_0)$, i.e., there is a constant $C_{d,\alpha,\kappa,p_0,s}>0$ such that
  \begin{align}
    \label{eq:weak}
    \|F(H)f\|_{L^{p_0,\infty}(\R^d)} \leq C_{d,\alpha,\kappa,p_0,s} \mathfrak N_s(F) \|f\|_{L^{p_0}(\R^d)}, \quad f\in L^{p_0}(\R^d).
  \end{align}
  Moreover, $F(H)$ extends uniquely to an $L^p(\R^d)$-bounded operator for all $p_-<p<p_+$ with
  \begin{align}
    \label{eq:strong}
    \|F(H)\|_{L^p(\R^d)\to L^p(\R^d)} \leq C_{d,\alpha,\kappa,p,s} \mathfrak N_s(F).
  \end{align}
\end{theorem}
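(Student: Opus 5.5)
The plan is the standard restricted-range Hörmander strategy: weak type $(p_0,p_0)$ through a Calderón--Zygmund type decomposition adapted to off-diagonal $L^{p_0}\to L^2$ estimates; strong type for $p\in(p_-,2]$ by interpolation with the trivial $L^2$ bound; strong type for $p\in[2,p_+)$ by duality, since $\overline{F}(H)=F(H)^*$ and $p_+=p_-'$.

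\textbf{Step 1: weak type via off-diagonal estimates.} Since Gaussian heat kernel bounds fail for $\kappa>0$ (the heat kernel behaves like $|x|^{-\delta}|y|^{-\delta}$ near the origin), I would use the Blunck--Kunstmann / Auscher criterion instead of a pointwise kernel. It suffices to prove two things.

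First, a family of approximating operators $A_r$, for instance $A_r=(1+r^\alpha H)^{-m}$ or $A_r=\phi(r^\alpha H)$ with $\phi$ a Schwartz function, satisfies off-diagonal bounds
\begin{align*}
\|\one_{B_1}A_r\one_{B_2}\|_{p_0\to 2}\lesssim r^{-d(1/p_0-1/2)}\Bigl(1+\frac{\operatorname{dist}(B_1,B_2)}{r}\Bigr)^{-N}
\end{align*}
for balls of radius $r$, with polynomial decay $N>d$ of large enough order.

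Second, the multiplier $F(H)(1-A_r)$ satisfies a Hörmander-type integral condition in the $L^{p_0}\to L^{p_0,\infty}$ sense. To get this, I would decompose $F=\sum_k F(\cdot)\psi(2^{-k}\cdot)$ dyadically. For each piece I would obtain weighted estimates of the form
\begin{align*}
\|\one_{\{|x-y|>R\}}F_k(H)\one_{B}\|_{p_0\to 2}\lesssim (2^{k/\alpha}R)^{-s'}\,\fh_s(F),
\end{align*}
and sum them over dyadic annuli.

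\textbf{Step 2: off-diagonal estimates for smooth functions of $H$.} To get the estimates of Step 1, I would write $g(H)$, for $g\in C_c^\infty$, via the Helffer--Sjöstrand formula
\begin{align*}
g(H)=\frac1\pi\int_{\C}\bar\partial\tilde g(z)\,(H-z)^{-1}\,dz,
\end{align*}
where $\tilde g$ is an almost analytic extension of order $\sim s$. The task then reduces to polynomial off-diagonal bounds for the resolvent that lose at most a power of $|\im z|^{-1}$. This loss is compensated by the vanishing of $\bar\partial\tilde g$ to order $s$ near the real axis, which is what dictates the regularity threshold $S_*$.

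The resolvent bounds come from a quantitative Jaffard lemma. I would partition $\R^d$ into unit cubes $Q_j$ (after scaling, by homogeneity of $H$ it suffices to take $t=1$) and view $(H-z)$ as an operator-valued matrix $(\one_{Q_i}(H-z)\one_{Q_j})_{i,j}$. The off-diagonal entries decay like $|i-j|^{-d-\alpha}$ because the kernel of $(-\Delta)^{\alpha/2}$ away from the diagonal is $c|x-y|^{-d-\alpha}$. Jaffard's lemma, in a version tracking constants polynomially in $|\im z|^{-1}$, says inverses of such matrices in a suitable algebra inherit polynomial decay of order comparable to $d+\alpha$, at least for $\min\{1,\alpha\}$-fractions of it. This explains the factor $2(d+\alpha)/\min\{1,\alpha\}$ in $S_*$. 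The $L^2$ boundedness of $(H-z)^{-1}$ supplies the base estimate $\|(H-z)^{-1}\|\le|\im z|^{-1}$.

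\textbf{Step 3: upgrading $L^2\to L^2$ to $L^{p_0}\to L^2$.} The decay from Step 2 is in $L^2\to L^2$ norm, so I still need $p_0$-to-$2$ improvement on individual cubes, i.e. $\|\one_Q g(H)\|_{p_0\to2}$ uniformly. For this I would use the known heat kernel bounds $p_t(x,y)\asymp(1\wedge |x|/t^{1/\alpha})^{-\delta}(1\wedge|y|/t^{1/\alpha})^{-\delta}\,p^{(0)}_t(x,y)$ from Bogdan et al. The factor $|y|^{-\delta}$ lies in $L^{p_0'}_{\mathrm{loc}}$ precisely when $p_0'<d/\delta$, i.e. $p_0>p_-$. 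This is the origin of the sharp range. Writing $g(H)=\tilde g(H)e^{-H}$, combining this local $L^{p_0}\to L^2$ bound with the $L^2$ off-diagonal decay (composition of two off-diagonal families) gives the required estimates.

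\textbf{Main obstacle.} I expect the hardest step to be Step 2: making Jaffard's lemma quantitative in $|\im z|^{-1}$ uniformly over the operator-valued blocks, including the block containing the singularity at $0$. Near $0$ the Hardy term is not a small perturbation, so I would treat it by keeping the full $H$ on the diagonal blocks and only perturbing off-diagonal.
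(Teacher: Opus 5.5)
Your overall architecture matches the paper's. It uses a Blunck--Kunstmann Calder\'on--Zygmund decomposition, a dyadic splitting of the multiplier, and the Helffer--Sj\"ostrand formula combined with a quantitative Jaffard lemma. It then factors out $\me{-H}$ and uses the sharp bounds \eqref{eq:HK} to upgrade $L^2$ block decay to $L^{p_0}\to L^2$ bounds (dually $L^2\to L^q$), which is where $p_0>p_-$ enters.

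\textbf{The gap is in Step 2.} A Jaffard lemma such as Theorem~\ref{thm:QI} concerns elements of the Banach algebra $\J_s(\Z^d;\cb(\ch))$, $\ch=L^2([0,1)^d)$. These are bounded operators on $\ell^2(\Z^d;\ch)$ with bounded blocks and $\sup_{a,b}(1+|a-b|)^s\|A_{ab}\|<\infty$, including $a=b$. The block matrix $(\one_{Q_i}(H-z)\one_{Q_j})_{i,j}$ is not of this type:
\begin{itemize}
\item The diagonal blocks $\one_{Q_i}(-\Delta)^{\alpha/2}\one_{Q_i}$ are unbounded.
\item The blocks for touching cubes are unbounded too, because the kernel $c|x-y|^{-d-\alpha}$ is too singular across a common boundary point. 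Testing on functions concentrated in an $\epsilon$-ball around such a point gives norms $\gtrsim\epsilon^{-\alpha}$.
\item For $\kappa>0$ the block at the origin also carries the Hardy singularity. At $\kappa=\kappa_{\rm c}$ the form domain is not even $H^{\alpha/2}$.
\end{itemize}
Keeping the full $H$ on the diagonal does not rescue this, since norm-controlled inversion is a Banach-algebra statement and no such algebra contains $H-z$. Relatedly, the decay input you name (the kernel of $(-\Delta)^{\alpha/2}$) is not what the argument can use. What is needed is block decay of a \emph{bounded} operator.

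\textbf{The missing idea is to invert a bounded operator with known block decay.} The paper takes $K=(H+1)^{-1}$ and proceeds as follows.
\begin{itemize}
\item Writing $K=\int_0^\infty\me{-t}\me{-tH}\,dt$ and using \eqref{eq:HK} gives $\|\one_{Q_a}K\one_{Q_b}\|\lesssim(1+|a-b|)^{-d-\alpha}$, so $K\in\J_{d+\alpha}$. The heat kernel bounds thus enter here too, not only in your Step 3, and they absorb the singularity at $0$ that you flagged as the main obstacle.
\item Theorem~\ref{thm:QI} applied to $z-K$, with $\|(z-K)^{-1}\|\le|\im z|^{-1}$, yields \eqref{eq:res}.
\item Helffer--Sj\"ostrand is run for $\phi(K)$ with $\phi(\lambda)=\me{\lambda^{-1}-1}m(\lambda^{-1}-1)$, so that $m(H)=\me{-H}\phi(K)$.
\end{itemize}
Your route can be repaired in the same spirit. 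One has $(H-z)^{-1}=-w+w^2(w-K)^{-1}$ with $w=(1+z)^{-1}$ and $|\im w|\sim|\im z|$ on compact sets. Hence resolvents of $H$ are resolvents of $K$, and the Jaffard input must come from the heat kernel.

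\textbf{Two smaller points.}
\begin{itemize}
\item Theorem~\ref{thm:QI} preserves the full decay exponent $d+\alpha$. The $\min\{1,\alpha\}$ only enters the loss $|\im z|^{-M_*}$ with $M_*=1+(d+\alpha)/\beta_*$, which is what fixes $S_*$. A decay exponent equal to only a fraction of $d+\alpha$ could fall below $d$ and break the lattice summations.
\item The cancellation $1-A_r$ must vanish to order $N>d/(\alpha q)$ at $\lambda=0$ for the low-frequency dyadic sum to converge. Your example $A_r=(1+r^\alpha H)^{-m}$ gives only first-order vanishing. The paper uses $1-(1-\me{-tH})^N$, and your dyadic bounds should carry the factor $\min\{1,(t2^k)^N\}$.
\end{itemize}
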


The range of admissible $p$ is sharp for $\kappa>0$ as can be seen by taking $F(H)=\me{-H}$ and using the heat kernel bounds \eqref{eq:HK} below; see also \cite[p.~112]{Merz2021}.
Theorem~\ref{thm:main} is well-known for $\alpha=2$, see, e.g., \cite{Blunck2003,Killipetal2018}. In the latter paper it is used to characterize Sobolev spaces adapted to powers of $H$. The function space analysis of \cite{Killipetal2018} has been generalized to $\alpha<2$ in \cite{Franketal2021} for $p=2$, in \cite{Merz2021} for $p\neq2$ and $\kappa<0$, and eventually in \cite{BuiDAncona2023} for $p\neq 2$ and $\kappa\leq\kappa_{\rm c}$.
See also \cite{FrankMerz2023,BuiMerz2026} for similar conclusions for $(-\Delta)^{\alpha/2}|_{\R_+^d}-\kappa/x_d^\alpha$ and \cite{Buietal2026} for the Kolmogorov operator $(-\Delta)^{\alpha/2}-\kappa|x|^{-\alpha}x\cdot\nabla$.
Note that \cite{BuiNader2022} proves $L^p$-boundedness for Laplace-transform multipliers of $H$, including imaginary powers, also for all $p\in(p_-,p_+)$. Our Theorem~\ref{thm:main} here allows general bounded multipliers satisfying \eqref{eq:Hs}, without requiring a Laplace-transform representation.

Let us now abbreviate the notation appearing in Theorem~\ref{thm:main}. For functions $A,B \geq0$, we write $A\lesssim B$ to indicate that there is a constant $c$ such that $A\leq cB$. If $c$ depends on a parameter $\tau$, we may write $A\lesssim_\tau B$, but the dependence on fixed quantities like $d,\alpha,\kappa,\psi,\chi$ is usually ignored. The notation $A\sim B$ means $A\lesssim B \lesssim A$.
Displayed constants like $C$ in estimates may change from estimate to estimate, but we continue to use the same letter by an abuse of notation.
Furthermore, $\|A\|_\ch$ or $\|A\|$ denotes the operator norm of a linear operator on a given Hilbert space $\ch$. We denote the Lebesgue norm of a function $f$ by $\|f\|_{L^p(\R^d)}$ or, for short, by $\|f\|_{L^p}$ or even $\|f\|_p$. Likewise, we abbreviate $\|A\|_{L^p(\R^d)\to L^q(\R^d)}$ to $\|A\|_{L^p\to L^q}$ or even $\|A\|_{p\to q}$ for a linear operator $A$.
From now on, unless stated otherwise, every lemma, proposition, or theorem uses the assumptions in Theorem~\ref{thm:main}.


\subsection{Earlier results}

Given H\"ormander's theorem for Fourier multipliers it is natural to ask to which extent this theorem extends to more general operators and settings. Let us mention only some extensions: H\"ormander's theorem has, e.g., been extended to group invariant Laplace operators acting on Lie groups of polynomial growth \cite{Alexopoulos1994}, to homogeneous Laplace operators acting on homogeneous groups \cite{FollandStein1982,DeMicheleMauceri1987,MauceriMeda1990,Christ1991}, and to elliptic operators on compact manifolds, see, e.g., \cite{SeegerSogge1989,Seegeretal1991} and non-compact manifolds, see, e.g., \cite{ClercStein1974,ChristMueller1996,Guillarmouetal2013,Martinietal2018}.
Spectral multiplier theorems for arbitrary non-negative self-adjoint operators on Hilbert spaces $L^2(X)$, where $X$ is a doubling metric measure space, were, e.g., investigated in \cite{Duongetal2001,Duongetal2002} under the assumption that the associated heat kernels satisfy Gaussian upper heat kernel bounds. In this regard, see also the works \cite{BlunckKunstmann2003,Blunck2003,Blunck2007}, as well as recent improvements in \cite{Chenetal2016,Chenetal2020}.
The work \cite{Hebisch1990} proves a H\"ormander spectral multiplier theorem for $-\Delta+V(x)$ with $V(x)\geq0$ in $L^2(\R^d)$. A key idea there was to represent the given multiplier in terms of an integral involving the holomorphic extension of the (real-time) heat kernel to complex times with positive real part. The analysis then crucially relies on the Gaussian upper heat kernel bound for $-\Delta+V$ at real times since, by a Phragm\'en--Lindel\"of theorem, the complex-time heat kernel retains Gaussian decay. In \cite{Hebisch1995}, a H\"ormander multiplier theorem is proved for operators whose heat kernel only decays algebraically and satisfies a certain H\"older condition, which is analogous to the classical H\"ormander condition in singular integral theory. Prime examples for operators with slow heat kernel decay are fractional Schr\"odinger operators $(-\Delta)^{\alpha/2} + V$ with $\alpha\in(0,2)$ in $L^2(\R^d)$. The abstract theorem in \cite{Hebisch1995} was applied in \cite{Merz2021} to prove a H\"ormander multiplier theorem for the present operator $H=(-\Delta)^{\alpha/2}-\kappa/|x|^\alpha$ when $\kappa<0$; the assumptions in \cite{Hebisch1995} are not fulfilled if $\kappa>0$.
The above examples indicate that the subject of spectral multipliers is very broad, and we do not attempt to provide a comprehensive bibliography here. For further literature, we refer, e.g., to the above mentioned works and the references cited therein.


\subsection{Ingredients in the proof}
\label{ss:proofsketch}

Classical approaches to spectral multiplier theorems rely on Calder\'on--Zygmund theory, where ``the bad part'' is controlled, e.g., using Gaussian heat kernel bounds since these can be extended nicely to the right complex half-plane or H\"ormander-type conditions on the heat kernel. While in \cite{ZhaoZheng2023,Merz2022} complex-time heat kernel estimates for fractional Schr\"odinger operators have been obtained, they seem to be insufficient in the present situation: The bounds of \cite{ZhaoZheng2023} do not cover Hardy potentials, and also grow exponentially for large times. On the other hand, while the bounds in \cite{Merz2022} do hold for the complex-time heat kernel of the Hardy operator and do not grow for large times, they deteriorate too fast as the ratio $|\im z|/|\re z|$ grows.

The present approach instead uses the Helffer--Sj\"ostrand formula \cite[Proposition~7.2]{HelfferSjostrand1989}, which expresses functions of self-adjoint operators in terms of their resolvents. We will apply this formula to compactly supported functions of $(H+1)^{-1}$ (instead of to $F(H)$ directly) and aim for weighted estimates for such functions.
Note that the Helffer--Sj\"ostrand formula was already used in \cite[Section~3]{JensenNakamura1994} to prove $L^p$-boundedness of $F(H)$ for smooth, decaying symbols $F\in S^{-\epsilon}$, $\epsilon>0$, for Schr\"odinger operators under Kato-class assumptions on the potential, using polynomial $L^p$-resolvent bounds.
In \cite{Davies1995TF} an abstract version of this construction is developed for closed operators with real spectrum on Banach spaces under polynomial resolvent bounds.
In \cite[Section~6]{DuongRobinson1996}, H\"ormander and Davies--Helffer--Sj\"ostrand functional calculi are studied under Gaussian heat kernel assumptions. These assumptions are not satisfied by the fractional Schr\"odinger operators considered here.

To prove weighted estimates for the resolvent of $(H+1)^{-1}$ appearing in the Helffer--Sj\"ostrand, we use a quantitative operator-valued version of Jaffard's inverse-closedness theorem \cite{Jaffard1990} on $\Z^d$. The estimate preserves the offdiagonal decay exponent and provides polynomial control as the spectral parameter approaches the real axis. Applying the Helffer--Sj\"ostrand formula then gives bounds for compactly supported, sufficiently smooth functions of $(H+1)^{-1}$. Together with real-time heat kernel estimates, these bounds yield the localized multiplier estimates needed for the H\"ormander multiplier theorem.
In this regard, note that quantitative inverse estimates for matrices with off-diagonal decay have also been applied to spectral theory and smooth functional calculi, e.g., in \cite[\S~4]{Baskakov1997E}.

\smallskip
Let us give a more precise outline of the strategy now. The key is to control compactly supported, sufficiently smooth multipliers. In particular, using the Calder\'on--Zygmund theory of \cite{BlunckKunstmann2003}, it will be crucial to control $\|\one_{Q_a} m(H) \one_{Q_b}\|_{2\to q}$, where
$$
Q_a = a+[0,1)^d \quad\text{with}\quad a,b\in\Z^d.
$$
By an approximation argument it suffices to assume $m$ is $C^\infty$. We then write such a compactly supported, sufficiently smooth multiplier $m\in C_c^\infty(I)$, with a compact interval $I\subset(0,\infty)$, as
\begin{align}
  m(H) = \me{-H}\phi(K), \qquad K=(H+1)^{-1},
\end{align}
where
\begin{align}
  \phi(\lambda) =
  \begin{cases}
    \exp(\lambda^{-1}-1) m(\lambda^{-1}-1), & \quad \lambda\in(0,1], \\
    0, & \quad \lambda=0
  \end{cases}
\end{align}
satisfies $\phi\in C_c^\infty(J)$ for a fixed compact $J\subset(0,1)$ determined by $I$. Clearly, $m\mapsto\phi$ is smooth on the relevant compact sets and $\|\phi\|_{C^{L+1}}\lesssim_I \|m\|_{C^{L+1}}$ for any $L\in\N$. Using $L^2\to L^q$-operator norm bounds for projections of the heat kernel on two boxes, the key inequality, which allows us to control $\|\one_{Q_a} m(H) \one_{Q_b}\|_{2\to q}$ is
\begin{align}
  \label{eq:key-intro}
  \|\one_{Q_a}\phi(K)\one_{Q_b}\|_{2\to 2} \lesssim_\phi (1+|a-b|)^{-s}
\end{align}
for some $s>d$, where the implicit constant depends on finitely many derivatives of $\phi$. To show \eqref{eq:key-intro}, we use the Helffer--Sj\"ostrand formula\footnote{A detailed discussion of the Helffer--Sj\"ostrand formula and some of its extensions is contained, e.g., in \cite{DerezinskiGerard1997,Isozaki2023,IshidaMerz2026}.} to write
$$
\phi(K) = \frac{1}{2\pi i} \int_\C \bar\partial\widetilde\phi(z) (z-K)^{-1}\,dz\wedge d\bar z
$$
where $\bar\partial=\frac{\partial}{\partial\bar z}=\frac12(\partial_x+i\partial_y)$ and $dz\wedge d\bar z=-2idx\,dy$ for $z=x+iy$, and, for a fixed sufficiently large $L\in\N$, $\widetilde\phi\in C_c^\infty(\C)$ is an almost-analytic extension of $\phi$ of order $L$, satisfying $|\bar\partial\widetilde\phi(x+iy)| \lesssim_L |y|^L \|\phi\|_{C^{L+1}}$.
This almost-analytic extension allows us to control singularities of $(z-K)^{-1}$ on the real axis. The key ingredient is then an estimate on $\one_{Q_a}(z-K)^{-1}\one_{Q_b}$. We shall show that there is $M=M(d,\alpha)$ such that
\begin{align}
  \label{eq:key-intro2}
  \|\one_{Q_a}(u+iv-K)^{-1}\one_{Q_b}\|
  \lesssim (1+|a-b|)^{-s}|v|^{-M}, \quad u\in[-1,2], \quad 0<|v|\leq1.
\end{align}
This inversion estimate uses boundedness of $(1+|a-b|)^{s}\|\one_{Q_a}K\one_{Q_b}\|$, which in turn can be inferred from the sharp heat kernel bounds for $H$ by \cite{Bogdanetal2019,JakubowskiWang2020,Choetal2020}.
Estimate \eqref{eq:key-intro2} follows from an operator-valued version of Jaffard's lemma \cite{Jaffard1990} with polynomial control of the Jaffard norm of the inverse in terms of the original Jaffard norm and the operator norm of the inverse.
Quantitative estimates for the entries of inverses of operator-valued matrices are proved in \cite{Baskakov1997E}; see also \cite{BaskakovKrishtal2014} for a general framework for decay estimates of inverse operators.
A quantitative version of Jaffard's lemma for scalar-valued matrices with polynomial off-diagonal decay is established, e.g., in \cite{GrochenigKlotz2014,FangShin2020}. In particular, the coarse-localization and commutator method of \cite{FangShin2020} yields an explicit polynomial bound. Qualitative inverse-closedness results have been proved in the operator-valued setting in \cite[Corollary~4.5]{KohldorferBalazs2025W}, \cite[Theorem~II.12]{KohldorferBalazs2025}.
In Section~\ref{s:inverse}, we adapt the proof of \cite{FangShin2020} to show the needed operator-valued bound in Theorem~\ref{thm:QI}. Importantly, the bound preserves the decay exponent $s>d$ and has polynomial dependence on the operator norm of the inverse. Applying this estimate to $z-K$, we obtain \eqref{eq:key-intro2}, with a fixed power of $|\im z|^{-1}$. Thus, choosing the order of the almost-analytic extension sufficiently high yields \eqref{eq:key-intro}.
We expect that the present approach can also yield spectral multiplier theorems for larger classes of operators whose heat kernel decays only polynomially.


\subsection*{Organization and notation}

In Section~\ref{s:cz}, we reduce the proof of Theorem~\ref{thm:main} via Calder\'on--Zygmund theory, which we adapt from \cite{BlunckKunstmann2003}, to estimates for the ``bad part'', which we decompose into an ``easy'' and a ``difficult'' part; the easy part can be dealt with real-time heat kernel smoothing, while the difficult part is the one that in earlier works relied on suitable complex-time heat kernel estimates.
In Section~\ref{s:heat}, we recall the fundamental heat kernel bounds for $H$.
In Section~\ref{s:realtime}, we prepare heat kernel smoothing estimates which we then use to treat the easy part of the bad part in the Calder\'on--Zygmund decomposition.
In Section~\ref{s:baddifficult}, we treat the difficult part of the bad part of the Calder\'on--Zygmund decomposition by a dyadic decomposition of the multiplier.
The above-mentioned key inequalities \eqref{eq:key-intro}--\eqref{eq:key-intro2} that allow us to control every dyadic piece are proved in Section~\ref{s:hs-calculus}, which relies on a quantitative inversion theorem, which we state and prove in Section~\ref{s:inverse}.

We use the following notation throughout the manuscript.
For a set $\Omega$, we denote its complement by $\Omega^c$. The indicator function of $\Omega$ is denoted by $\one_\Omega$.
For $r>0$, $x\in\R^d$, and for $j\in\N_0$, let
\begin{align}
  \begin{split}
    B(x,r) & := \{z:|z-x|<r\}, \\
    A_j(x,r) & := B(x,2^jr)\setminus B(x,2^{j-1}r),\quad j\ge1, \qquad A_0(x,r):=B(x,r).
  \end{split}
\end{align}
We introduce further notation as we proceed.


\section{Calder\'on--Zygmund theory for $H$}
\label{s:cz}

In this section, we reduce the proof of Theorem~\ref{thm:main} to the two key inequalities~\eqref{eq:smoothmax-intro} and \eqref{eq:keydifficultpart} below. First, since $\|F\|_\infty<\infty$, the operator $F(H)$ is bounded on $L^2$ by the spectral theorem. Its $L^p$ boundedness for all $p_-<p<p_+$ follows once we prove the weak-type bound \eqref{eq:weak} at $p=p_0$ by Marcinkiewicz interpolation and duality. Thus, Theorem~\ref{thm:main} is proved, once we show
\begin{align}
  \label{eq:weaklp}
  |\{x:|F(H)f(x)|>\lambda \mathfrak N_s(F)\}|
  \lesssim \lambda^{-p_0}\|f\|_{p_0}^{p_0}, \quad \lambda>0.
\end{align}
Without loss of generality, we assume $\mathfrak N_s(F)>0$.
To show \eqref{eq:weaklp}, we first assume $f\in L^{p_0}\cap L^2$. Once \eqref{eq:weaklp} is shown for such functions, it extends to all $f\in L^{p_0}$ by density of $L^{p_0}\cap L^2$ in $L^{p_0}$.

With these preliminaries, we begin the proof of \eqref{eq:weaklp} by adapting the Calder\'on--Zygmund theory of \cite{BlunckKunstmann2003}, which is particularly suitable for operators whose heat kernels have singularities, such as $\me{-tH}$ with $\kappa>0$, see \eqref{eq:HK} below. 
By the usual dyadic Calder\'on--Zygmund decomposition (see, e.g., \cite[Ch.~I]{Stein1970}) applied to $|f|^{p_0}$ at height $\lambda^{p_0}$, we obtain pairwise disjoint maximal dyadic cubes $Q$ with
\begin{align}
  \label{eq:cubes}
  \sum_Q|Q| \leq \lambda^{-p_0}\|f\|_{p_0}^{p_0}, \qquad
  \lambda^{p_0} < \frac{1}{|Q|}\int_Q |f|^{p_0} \leq 2^d \lambda^{p_0}.
\end{align}
Define the cube average $f_Q:=|Q|^{-1}\int_Q f$, and ``the bad and good parts''
\begin{align}
  b_Q := (f-f_Q)\one_Q, \qquad
  g := f-\sum_Qb_Q.
\end{align}
Hence,
\begin{align}
  \label{eq:CZproperties}
  \begin{split}
    & f = g + \sum_Q b_Q, \\
    & \|g\|_{p_0} \leq C_d\|f\|_{p_0}, \qquad \|g\|_2^2 \leq C_d \lambda^{2-p_0}\|f\|_{p_0}^{p_0}, \\
    & \|b_Q\|_{p_0} \leq C_d \lambda |Q|^{1/p_0}.
  \end{split}
\end{align}
Note, by disjointness of the cubes,
\begin{align}
  \label{eq:disjoint}
  \begin{split}
    \sum_Q\one_Q & \leq 1 \quad \text{a.e.}, \\
    \left\|\sum_Q\one_Q\right\|_a & = \left(\sum_Q|Q|\right)^{1/a}, \quad 1\leq a<\infty.
  \end{split}
\end{align}
For each cube $Q$, let $\ell(Q)$ denote its side length and $z_Q$ its center. Given the degree of homogeneity $-\alpha$ of $H$, we further set
\begin{align}
  r_Q := 2\sqrt{d}\ell(Q), \qquad t_Q := r_Q^\alpha, \qquad U_Q := B(z_Q,8r_Q).
\end{align}
Note
$$
Q\subset B(z_Q,r_Q/2), \qquad
|B(z_Q,r_Q/2)| \sim_d |Q|, \qquad
\sum_Q|U_Q|\lesssim_d\sum_Q|Q|.
$$
Next, we adapt the bad part $b_Q$ to the operator $H$ and its semigroup. To this end, introduce an integer $N\in\N$ such that
\begin{align}
  \label{eq:Ncond}
  N > \frac{d}{\alpha q}
\end{align}
and
\begin{align}
  \begin{split}
    D_t := (1-\me{-tH})^N, \qquad
    D_Q := D_{t_Q}, \qquad
    E_Q := 1-D_Q.
  \end{split}
\end{align}
Since $\slim_{t\to0}\me{-tH}=\one_{L^2}$, $D_t$ acts as a refined approximation of zero and $E_Q$ as a refined approximation of the identity. Using $D_Q$ and $E_Q$, we decompose bad part further into
\begin{align}
  h_1:=\sum_Q E_Q b_Q, \qquad h_2:=\sum_QD_Q b_Q.
\end{align}
We call $h_1$ the {\em easy part} and $h_2$ the {\em difficult part}. We choose this terminology since $h_1$ can be controlled just using the real-time heat kernel bounds, while $h_2$ requires exploitation of the cancellations in $D_Q$ (with sufficiently high $N$). In the following two subsections, we first control the contribution from the easy and then from the difficult part. In the final subsection, we conclude the proof of \eqref{eq:weaklp}, under the assumption that the two key inequalities \eqref{eq:smoothmax-intro} and \eqref{eq:keydifficultpart} below hold.


\subsection{Analysis of the easy part}
\label{ss:easypart}
 
Following the Calder\'on--Zygmund theory of \cite{BlunckKunstmann2003}, let us introduce for $a\in[1,\infty]$,
\begin{align}
  \cn_{a,r}f(x) := r^{-d/a} \|\one_{B(x,r)}f\|_{L^a},
  \qquad
  \cm_a f(x) := \sup_{R>0}R^{-d/a} \|\one_{B(x,R)}f\|_{L^a}.
\end{align}
Recall $D_t = (1-\me{-tH})^N$.
In Proposition~\ref{prop:heatmax} below, we will show for $r=t^{1/\alpha}$ and $\phi\in L^2$,
\begin{align}
  \label{eq:smoothmax-intro}
  \cn_{q,r/2}\left((1-D_t)^*\phi\right)(z) \lesssim_N \cm_{p_0}\phi(x), \quad |x-z|<r/2.
\end{align}
Let us accept this bound for now. Since $r_Q=2\sqrt d\,\ell(Q)$ and $z_Q$ is the center of $Q$, we have $|x-z_Q|\leq r_Q/4$ for $x\in Q$. Hence, \eqref{eq:smoothmax-intro} is applicable with $z=z_Q$, $r=r_Q$, and every $x\in Q$. For an arbitrary $\phi\in L^2$, we thus obtain, by H\"older's inequality, \eqref{eq:CZproperties}, and \eqref{eq:smoothmax-intro} with $r=r_Q$, $z=z_Q$, and every $x\in Q$,
\begin{align*}
  |\langle \phi, E_Q b_Q\rangle|
  \leq \|b_Q\|_{p_0} \|\one_{Q} E_Q\phi\|_q
  \lesssim \lambda |Q|^{1/p_0} r_Q^{d/q} (\cm_{p_0}\phi)(x)
  \lesssim \lambda |Q|(\cm_{p_0}\phi)(x).  
\end{align*}
The left side is independent of $x\in Q$. Hence, using $|Q|^{-1}\int_Q=1$, we obtain
\begin{align*}
  |\langle \phi, E_Q b_Q\rangle|
  =\frac{1}{|Q|}\int_Q|\langle \phi, E_Q b_Q\rangle|\,dx
  \lesssim \lambda \int_Q (\cm_{p_0}\phi)(x)\,dx.
\end{align*}
Summing over the disjoint cubes and using Cauchy--Schwarz and \eqref{eq:disjoint} yields
\begin{align}
  \label{eq:h1dual}
  \begin{split}
    |\langle \phi,h_1\rangle|
    & \lesssim \lambda\int_{\R^d}\left(\sum_Q\one_Q(x)\right)(\cm_{p_0}\phi)(x)\,dx
    \lesssim \lambda \left(\sum_Q|Q|\right)^{1/2} \|\cm_{p_0}\phi\|_2 \\
    & \lesssim \lambda \left(\sum_Q|Q|\right)^{1/2} \|\phi\|_2.
  \end{split}
\end{align}
Here we used $p_0<2$ to use the $L^2$-boundedness of $\cm_{p_0}$ by the Hardy--Littlewood maximal inequality. Taking the supremum over all $\phi\in L^2$ thus shows
\begin{align}
  \label{eq:h1norm}
  \|h_1\|_2 \lesssim \lambda\left(\sum_Q|Q|\right)^{1/2}.
\end{align}


\subsection{Analysis of the difficult part}
\label{ss:difficultpart}

For each $Q$, set
$$
G_Q := \one_{U_Q^c} F(H) D_Q\one_Q.
$$
As in the previous subsection, we argue by duality. Thus, let $\phi\in L^2\cap L^q$ be arbitrary. In Theorem~\ref{thm:high-regularity-cancellation} below, we show for $\phi\in L^2$, $r=t^{1/\alpha}$, and $x,y\in\R^d$ with $|x-y|<r/2$,
\begin{align}
  \label{eq:keydifficultpart}
  \cn_{q,r/2}\left( \left[F(H)D_t\right]^*\one_{B(y,4r)^c} \phi \right)(y)
  \lesssim_{d,\alpha,\kappa,p_0,s,N,\psi,\chi}\fh_s(F)\,(\cm_2 \phi)(x),
\end{align}
uniformly in $Q,x,y,\phi,F$.
Accepting this bound for now, we have, for $t=t_Q$, $r=r_Q$, $y=z_Q$, and every $x\in Q$,
\begin{align*}
  |\langle \phi,G_Qb_Q\rangle|
  & \leq \|b_Q\|_{p_0} \|\one_Q(F(H)D_Q)^*\one_{U_Q^c}\phi\|_q \\
  & \lesssim \lambda |Q|^{1/p_0} r_Q^{d/q} \fh_s(F) (\mathcal M_2\phi)(x) 
    \lesssim \lambda |Q| \fh_s(F) (\mathcal M_2\phi)(x)
\end{align*}
Here we applied \eqref{eq:keydifficultpart} to $\one_{U_Q^c}\phi$, using
$$
\one_{B(z_Q,4r_Q)^c}\one_{U_Q^c}\phi=\one_{U_Q^c}\phi,
\qquad
\cm_2(\one_{U_Q^c}\phi)\leq\cm_2\phi,
$$
and $Q\subset B(z_Q,r_Q/2)$.
As the left-hand side of the above estimate is independent of $x$, we use $|Q|^{-1}\int_Q dx=1$ to infer
\begin{align*}
  |\langle \phi,G_Qb_Q\rangle|
  = \frac{1}{|Q|}\int_Q |\langle \phi,G_Qb_Q\rangle|\,dx
  \lesssim \lambda \fh_s(F) \int_Q(\mathcal M_2\phi)(x)\,dx.
\end{align*}
Summing over the disjoint cubes yields
\begin{align*}
  |\langle \phi,\sum_Q G_Qb_Q\rangle|
  & \lesssim \lambda \fh_s(F) \int\left(\sum_Q\one_Q\right)(\mathcal M_2\phi)(x)\,dx \\
  & \lesssim \lambda \fh_s(F) \left(\sum_Q|Q|\right)^{1/p_0} \|\mathcal M_2\phi\|_q 
    \lesssim \lambda \fh_s(F) \left(\sum_Q|Q|\right)^{1/p_0} \|\phi\|_q
\end{align*}
where we again used \eqref{eq:disjoint} and $q>2$ to apply the Hardy--Littlewood maximal inequality. Thus, by density of $L^2\cap L^q$ in $L^q$, followed by duality,
\begin{align}
  \label{eq:extnorm}
  \left\|\sum_Q G_Q b_Q\right\|_{p_0}
  \lesssim \lambda \fh_s(F) \left(\sum_Q|Q|\right)^{1/p_0}.
\end{align}


\subsection{Conclusion}

By Chebyshev's inequality, \eqref{eq:CZproperties}, \eqref{eq:h1norm}, and \eqref{eq:extnorm}, we obtain, with the enlarged exceptional set $U=\bigcup_Q U_Q$,
\begin{align}
  \label{eq:exc}
  \begin{split}
    |\{|F(H)g|>\Lambda/3\}| & \lesssim \Lambda^{-2} \mathfrak N_s(F)^2 \lambda^{2-p_0}\|f\|_{p_0}^{p_0}, \\
    |\{|F(H)h_1|>\Lambda/3\}| & \lesssim \Lambda^{-2} \mathfrak N_s(F)^2 \lambda^2\sum_Q|Q| \lesssim \Lambda^{-2} \mathfrak N_s(F)^2\lambda^{2-p_0}\|f\|_{p_0}^{p_0}, \\
    |\{x\in U^c:|F(H)h_2(x)|>\Lambda/3\}| & \lesssim \Lambda^{-p_0} \lambda^{p_0} \fh_s(F)^{p_0} \sum_Q|Q| \lesssim \Lambda^{-p_0} \fh_s(F)^{p_0} \|f\|_{p_0}^{p_0}.
  \end{split}
\end{align}
Using $|U|\lesssim \sum_Q|U_Q|\lesssim \lambda^{-p_0}\|f\|_{p_0}^{p_0}$ and taking $\Lambda=\lambda\mathfrak N_s(F)$ yields~\eqref{eq:weaklp} and concludes the proof of Theorem~\ref{thm:main}, assuming \eqref{eq:smoothmax-intro} (Proposition~\ref{prop:heatmax}) and \eqref{eq:keydifficultpart} (Theorem~\ref{thm:high-regularity-cancellation}).


\section{Heat kernel of $H$}
\label{s:heat}

We recall from \cite{Bogdanetal2019,JakubowskiWang2020,Choetal2020} the sharp heat kernel bounds for $H$. For $d\in\N$, $\alpha\in(0,\min\{2,d\})$, $\kappa\leq\kappa_{\rm c}$, $t>0$, $r:=t^{1/\alpha}$, and $x,y\in\R^d\setminus\{0\}$, they state
\begin{align}
  \label{eq:HK}
  \begin{split}
    \me{-tH}(x,y) & \sim \me{-t(-\Delta)^{\alpha/2}}(x,y) w_r(x)w_r(y), \\
    \me{-t(-\Delta)^{\alpha/2}}(x,y) & \sim \frac{t}{(r+|x-y|)^{d+\alpha}}, \\
    w_r(x) & := \min\left\{1, \frac{|x|}{r}\right\}^{-\delta}.
  \end{split}
\end{align}
We record a simple weighted estimate involving $w_r$ for $\kappa>0$.

\begin{lemma}
  \label{lem:cubew}
  Let $1\leq u<\infty$ and $r>0$. Let $0<\kappa\leq\kappa_{\rm c}$ and $\delta$ be defined as in \eqref{eq:defdelta}. Then, the condition $u\delta<d$ is necessary and sufficient for
  \begin{align*}
    \sup_{a\in \Z^d}\|\one_{Q_a}w_r\|_{L^u} < \infty.
  \end{align*}
  When it holds, then, uniformly in $r>0$,
  \begin{align}
    \label{eq:weight}
    c_{d,u,\delta}(1+r^\delta)
    \leq \sup_{a\in\Z^d}\|\one_{Q_a} w_r\|_{L^u}
    \leq C_{d,u,\delta}(1+r^\delta).
  \end{align}
\end{lemma}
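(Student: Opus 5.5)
Since $0<\kappa\le\kappa_{\rm c}$, we have $\delta\in(0,(d-\alpha)/2]$. Hence $w_r(x)=1$ for $|x|\ge r$ and $w_r(x)=(r/|x|)^{\delta}=r^\delta|x|^{-\delta}$ for $|x|<r$. In other words, $w_r^u=\max\{1,r^{u\delta}|x|^{-u\delta}\}$, and
\begin{align*}
  1\le w_r(x)^u\le 1+r^{u\delta}|x|^{-u\delta}.
\end{align*}
The plan is to estimate $\int_{Q_a}w_r^u$ directly from this pointwise description, treating the cubes near the origin separately from the others.

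\emph{Necessity.} Take a cube $Q_a$ whose closure contains $0$, e.g.\ $a=0$. Near the origin, $w_r^u\ge r^{u\delta}|x|^{-u\delta}$ on $Q_0\cap B(0,r)$. This set contains a truncated cone $\{x\in(0,\infty)^d:|x|<\min\{r,1\}\}$, i.e.\ a positive proportion of a small ball around $0$. If $u\delta\ge d$, then $\int_{Q_0\cap B(0,\rho)}|x|^{-u\delta}\,dx=\infty$ in polar coordinates, so $\|\one_{Q_0}w_r\|_{L^u}=\infty$.

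\emph{Upper bound, assuming $u\delta<d$.} For every $a\in\Z^d$ we use the pointwise bound:
\begin{align*}
  \int_{Q_a}w_r^u\le 1+r^{u\delta}\int_{Q_a}|x|^{-u\delta}\,dx .
\end{align*}
Since $|x|^{-u\delta}$ is a decreasing radial function, its integral over a unit cube is maximized, up to a dimensional constant, by the cubes touching the origin. Concretely, $\int_{Q_a}|x|^{-u\delta}\le\int_{B(0,\sqrt d)}|x|^{-u\delta}$ for $|a|\le 2\sqrt d$, while for $|a|>2\sqrt d$ we have $|x|\gtrsim|a|\ge 1$ on $Q_a$. In both cases the integral is bounded by some $C_{d,u,\delta}<\infty$, using $u\delta<d$. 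Taking $u$-th roots gives $\|\one_{Q_a}w_r\|_u\le(1+Cr^{u\delta})^{1/u}\lesssim 1+r^\delta$, uniformly in $a$ and $r$.

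\emph{Lower bound.} First, $w_r\ge1$ gives $\|\one_{Q_a}w_r\|_u\ge1$ for every $a$. Second, take $a=0$ and consider the set $S:=Q_0\cap B(0,\min\{r,1\}/2)$, on which $|x|<r$. If $r\ge1$, then $S=Q_0\cap B(0,1/2)$ has measure $c_d>0$, and on it $w_r\ge r^\delta 2^{\delta}\ge r^\delta$; hence $\|\one_{Q_0}w_r\|_u\ge c_d^{1/u}r^\delta$. Combining the two cases gives $\sup_a\|\one_{Q_a}w_r\|_u\ge c\max\{1,r^\delta\}\ge \tfrac c2(1+r^\delta)$. For $r<1$ the bound $\ge1\ge\tfrac12(1+r^\delta)$ already suffices.

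There is no real obstacle here. The only care needed is making sure the constants depend only on $d,u,\delta$ and not on $r$ or $a$; this is handled by the radial monotonicity comparison in the upper bound.
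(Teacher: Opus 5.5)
Your proof is correct and follows essentially the same route as the paper. The pointwise bound $1\le w_r^u\le 1+r^{u\delta}|x|^{-u\delta}$, integrated over unit cubes, gives the upper bound; $w_r\ge 1$ together with the size of $w_r$ near the origin on $Q_0$ gives the lower bound; and the non-integrability of $|x|^{-u\delta}$ at $0$ when $u\delta\ge d$ gives necessity.

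One cosmetic point: for $|a|\le 2\sqrt d$ the cube $Q_a$ need not lie in $B(0,\sqrt d)$. Your comparison $\int_{Q_a}|x|^{-u\delta}\le\int_{B(0,\sqrt d)}|x|^{-u\delta}$ therefore needs the rearrangement inequality, using $|Q_a|=1\le|B(0,\sqrt d)|$. It is simpler to use the containment $Q_a\subset B(0,3\sqrt d)$ instead, which gives a finite bound just as well.
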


\begin{proof}
  The upper bound follows from
  $$
  \sup_{a\in\Z^d}\int_{Q_a}\left(1+r^{u\delta}|x|^{-u\delta}\right)\,dx
  \lesssim 1 + r^{\delta u}, \quad u\delta<d,
  $$
  uniformly in $r>0$.
  For the lower bound, $w_r\geq1$ and, on $Q_0$, $w_r(x)\geq r^\delta|x|^{-\delta}$, whose $L^u(Q_0)$ norm is a positive finite multiple of $r^\delta$. The integral over $Q_0$ diverges when $u\delta\geq d$.
\end{proof}

We now prove localized smoothing estimates for the heat kernel and the resolvent.

\begin{proposition}
  \label{prop:cubes}
  For every $a,b\in\Z^d$, we have
  \begin{align}
    \label{eq:Ecube}
    \|\one_{Q_a} \me{-H}\one_{Q_b}\|_{2\to q} & \lesssim_q (1+|a-b|)^{-d-\alpha}, \\
    \label{eq:Kcube}
    \|\one_{Q_a}(1+H)^{-1}\one_{Q_b}\|_{2\to2} & \lesssim (1+|a-b|)^{-d-\alpha}.
  \end{align}
  In particular, $\me{-H}:L^2(\R^d)\to L^q(\R^d)$ is bounded.
\end{proposition}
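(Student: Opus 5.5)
We will derive both estimates from the two-sided heat kernel bound \eqref{eq:HK}, used only in its upper form, together with Lemma~\ref{lem:cubew}. For \eqref{eq:Ecube} we take $t=1$, so $r=1$. The kernel of $\one_{Q_a}\me{-H}\one_{Q_b}$ is then dominated by
$$
\frac{w_1(x)w_1(y)}{(1+|x-y|)^{d+\alpha}}\one_{Q_a}(x)\one_{Q_b}(y).
$$
For $x\in Q_a$, $y\in Q_b$ we have $1+|x-y|\sim 1+|a-b|$, with constants depending only on $d$. Hence the kernel is bounded by $(1+|a-b|)^{-d-\alpha}\,(\one_{Q_a}w_1)(x)(\one_{Q_b}w_1)(y)$, a rank-one kernel. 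Its $L^2\to L^q$ norm is at most $\|\one_{Q_a}w_1\|_{L^q}\|\one_{Q_b}w_1\|_{L^2}$ by Cauchy--Schwarz in $y$ followed by taking the $L^q$ norm in $x$. Lemma~\ref{lem:cubew} shows that both factors are bounded uniformly in $a,b$, provided $q\delta<d$ and $2\delta<d$. Both conditions hold by \eqref{eq:range}, since $q<d/\max\{\delta,0\}$ and $\delta\le(d-\alpha)/2$. For $\kappa\le0$ we have $w_1\le1$, so this case is trivial.

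For the global statement $\me{-H}:L^2\to L^q$, we sum the local pieces using a Schur/Young-type argument on $\Z^d$. Write $f=\sum_b\one_{Q_b}f$. Then
$$
\|\one_{Q_a}\me{-H}f\|_q\lesssim\sum_b(1+|a-b|)^{-d-\alpha}\|\one_{Q_b}f\|_2 .
$$
The sequence $\big(\|\one_{Q_b}f\|_2\big)_b$ lies in $\ell^2(\Z^d)$, and convolution with the $\ell^1$ kernel $(1+|\cdot|)^{-d-\alpha}$ maps $\ell^2$ into $\ell^2\subset\ell^q$ because $q\ge2$. Since $\|\me{-H}f\|_q^q=\sum_a\|\one_{Q_a}\me{-H}f\|_q^q$, this gives the bound.

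For \eqref{eq:Kcube} we use the Laplace transform
$$
(1+H)^{-1}=\int_0^\infty \me{-t}\me{-tH}\,dt .
$$
The kernel of $(1+H)^{-1}$ is therefore bounded by
$$
\int_0^\infty \me{-t}\,\frac{t\,w_{t^{1/\alpha}}(x)\,w_{t^{1/\alpha}}(y)}{(t^{1/\alpha}+|x-y|)^{d+\alpha}}\,dt .
$$
This time we should bound the operator norm of each time slice separately, rather than the kernel pointwise, because the weights depend on $t$. For $x\in Q_a$ and $y\in Q_b$ with $|a-b|\ge2\sqrt d$, we have $|x-y|\gtrsim1+|a-b|$. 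The slice at time $t$ is again dominated by a rank-one kernel, so its $L^2\to L^2$ norm is at most
$$
\frac{t}{(1+|a-b|)^{d+\alpha}}\,\sup_c\|\one_{Q_c}w_{t^{1/\alpha}}\|_2^2 \lesssim \frac{t\,(1+t^{2\delta/\alpha})}{(1+|a-b|)^{d+\alpha}},
$$
where we used Lemma~\ref{lem:cubew} with $u=2$, valid since $2\delta<d$. Integrating this against $\me{-t}$ gives a finite constant times $(1+|a-b|)^{-d-\alpha}$.

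For the finitely many near-diagonal pairs, the bound $\|\one_{Q_a}(1+H)^{-1}\one_{Q_b}\|\le\|(1+H)^{-1}\|\le1$ suffices, because $H\ge0$ by the Hardy inequality.

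The main obstacle is the weight singularity. The weights must be kept in $L^2$ and $L^q$ norms on unit cubes rather than bounded pointwise, and this is exactly where the restrictions $q<d/\delta$ and $\delta\le(d-\alpha)/2$ enter. A second point is the uniformity in $t$ in the resolvent integral: the factor $1+t^{2\delta/\alpha}$ grows polynomially, so integrability relies on the exponential factor $\me{-t}$.
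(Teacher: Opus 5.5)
Your proposal is correct and follows essentially the same route as the paper: rank-one domination of the heat kernel on unit cubes via \eqref{eq:HK} and Lemma~\ref{lem:cubew}, a Young-type summation over $\Z^d$ for the global $L^2\to L^q$ bound, and the Laplace-transform representation of $(1+H)^{-1}$ with a time-slice estimate for separated cubes plus the trivial bound $\|(1+H)^{-1}\|\le 1$ near the diagonal. The only cosmetic difference is that the paper bounds each time slice by its Hilbert--Schmidt norm, which for a rank-one-dominated kernel gives the same factor $\|\one_{Q_a}w_{t^{1/\alpha}}\|_2\|\one_{Q_b}w_{t^{1/\alpha}}\|_2$ that you use.
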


\begin{proof}
  We first show \eqref{eq:Ecube}.
  For all $x\in Q_a$ and $y\in Q_b$, we have, by the triangle inequality,
  \begin{align}
    \label{eq:cubedistance}
    1+|x-y| \gtrsim_d 1+|a-b|.
  \end{align}
  Hence, by \eqref{eq:HK},
  $$
  |(\one_{Q_a}\me{-H}\one_{Q_b}f)(x)|
  \lesssim (1+|a-b|)^{-d-\alpha} \one_{Q_a} w_1(x) \|\one_{Q_b}w_1\|_2 \|f\|_2.
  $$
  If $\kappa<0$, we estimate this further from above by $(1+|a-b|)^{-d-\alpha} \one_{Q_a}(x) \|f\|_2$. Thus, taking the $L^q(Q_a)$ norm and using Lemma~\ref{lem:cubew} (when $\kappa>0$) yields~\eqref{eq:Ecube}.

  Let us now show the global $L^2\to L^q$-boundedness of $\me{-H}$. Writing $f=\sum_b f\one_{Q_b}$, we have, by \eqref{eq:Ecube},
  \begin{align*}
    \|\one_{Q_a}\me{-H}f\|_q
    \lesssim \sum_b(1+|a-b|)^{-d-\alpha}\|\one_{Q_b}f\|_2.
  \end{align*}
  Thus, by Young's inequality $\|F\ast G\|_q\leq \|F\|_1\|G\|_q$ and $\ell^2\subset\ell^q$ for $q\geq2$,
  \begin{align*}
    \|\me{-H}f\|_q \lesssim \|f\|_2. 
  \end{align*}
  
  \smallskip
  We finally show \eqref{eq:Kcube} by writing $(1+H)^{-1}=\int_0^\infty\me{-t(1+H)}\,dt$.
  Consider first those $a,b$ for which $|a-b|>4\sqrt d+4$. Then, by \eqref{eq:HK} and \eqref{eq:cubedistance},
  \begin{align*}
    \|\one_{Q_a}\me{-tH}\one_{Q_b}\|_{2\to2}^2
    \leq \int_{Q_a}dx \int_{Q_b}dy\, |\me{-tH}(x,y)|^2
    \lesssim t^2\, (1+|a-b|)^{-2(d+\alpha)} \cdot (1+t^{4\delta_+/\alpha}).
  \end{align*}
  Since $\int_0^\infty \me{-t}t(1+t^{2\delta_+/\alpha})\,dt<\infty$, this yields \eqref{eq:Kcube} for those separated cubes. For $|a-b|\lesssim1$, \eqref{eq:Kcube} follows from $\|(1+H)^{-1}\|_{2\to2}\leq1$.
\end{proof}


\section{Maximal estimates for the easy part in the Calder\'on--Zygmund decomposition}
\label{s:realtime}

Recall $N\in\N$ is such that \eqref{eq:Ncond}, i.e., $N>d/(\alpha q)$ holds.
The goal of this section is to prove the following proposition, which we used to control the easy part of the bad part in the Calder\'on--Zygmund decomposition (Subsection~\ref{ss:easypart}).

\begin{proposition}
  \label{prop:heatmax}
  For $t>0$, $r=t^{1/\alpha}$, fixed $L\in\N$ and $\ell\in\{1,...,L\}$, and $\phi\in L^2$,
  \begin{align}
    \label{eq:heatmax}
    \cn_{q,r/2}(\me{-\ell t H}\phi)(z) \lesssim_L (\cm_{p_0}\phi)(x), \quad |x-z|<r/2.
  \end{align}
  In particular, with $N>d/(\alpha q)$ and $D_t = (1-\me{-tH})^N$,
  \begin{align}
    \label{eq:smoothmax}
    \cn_{q,r/2}\left((1-D_t)^*\phi\right)(z) \lesssim_N (\cm_{p_0}\phi)(x), \quad |x-z|<r/2.
  \end{align}
\end{proposition}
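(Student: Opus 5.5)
The plan is to work directly with the pointwise heat kernel bound \eqref{eq:HK} and to decompose $\phi$ into dyadic annuli around $z$. Since $\ell t$ with $\ell\le L$ is comparable to $t$, and $(\ell t)^{1/\alpha}\sim_L r$, all constants depend only on $L$. We also have $w_{(\ell t)^{1/\alpha}}\lesssim_L w_r$. So \eqref{eq:HK} gives, for all $u,y$,
\begin{align*}
  |\me{-\ell tH}(u,y)| \lesssim_L \frac{t}{(r+|u-y|)^{d+\alpha}}\, w_r(u)w_r(y).
\end{align*}
We write $\phi=\sum_{j\ge0}\phi_j$ with $\phi_j=\phi\one_{A_j(z,r)}$. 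For $u\in B(z,r/2)$ and $y\in A_j(z,r)$ with $j\ge 2$, we have $r+|u-y|\sim 2^jr$. For $j\in\{0,1\}$ we simply use $r+|u-y|\ge r$. Hence, for $u\in B(z,r/2)$,
\begin{align*}
  |\me{-\ell tH}\phi_j(u)| \lesssim_L r^{-d}2^{-j(d+\alpha)}\, w_r(u) \int_{B(z,2^jr)} w_r(y)|\phi(y)|\,dy .
\end{align*}

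Next I would apply H\"older's inequality with exponents $p_0,p_0'=q$:
\[
\int_{B(z,2^jr)}w_r|\phi|\le \|\one_{B(z,2^jr)}w_r\|_{q}\,\|\one_{B(z,2^jr)}\phi\|_{p_0}.
\]
Because $|x-z|<r/2$, we have $B(z,2^jr)\subset B(x,2^{j+1}r)$, so $\|\one_{B(z,2^jr)}\phi\|_{p_0}\le (2^{j+1}r)^{d/p_0}\cm_{p_0}\phi(x)$.

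The key scaling fact, which plays the role of Lemma~\ref{lem:cubew}, is that for any ball $B$ of radius $R\ge r/2$,
\[
\|\one_{B}w_r\|_{L^q}\lesssim R^{d/q}.
\]
This uses $q\delta<d$, which holds since $q<d/\max\{\delta,0\}$. To prove it, note that $w_r=1$ outside $B(0,r)$. Inside, $\int_{B(0,r)}r^{q\delta}|x|^{-q\delta}\,dx\sim r^d\le (2R)^d$. If $\kappa\le0$, then $w_r\le1$ and the bound is trivial.

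Applying this twice, once on $B(z,r/2)$ for the $L^q$ norm in $u$ and once on $B(z,2^jr)$ for the $y$-integral, I get
\begin{align*}
  (r/2)^{-d/q}\|\one_{B(z,r/2)}\me{-\ell tH}\phi_j\|_q
  \lesssim_L r^{-d}2^{-j(d+\alpha)} (2^jr)^{d/q}(2^jr)^{d/p_0}\cm_{p_0}\phi(x)
  = 2^{-j\alpha}\cm_{p_0}\phi(x).
\end{align*}
Summing the geometric series over $j$ gives \eqref{eq:heatmax}. The main point needing care is the weight at the origin. If $0$ lies near $B(z,r/2)$, then $w_r(u)$ is singular there, and it is exactly the condition $q\delta<d$ (equivalently $p_0>p_-$) that makes the local $L^q$ norm of $w_r$ scale like that of a constant. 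The argument is uniform in the position of $0$ because the bound $\|\one_Bw_r\|_q\lesssim R^{d/q}$ holds for every ball of radius at least $r/2$.

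For \eqref{eq:smoothmax}, I would expand $1-D_t=1-(1-\me{-tH})^N=\sum_{\ell=1}^N\binom N\ell(-1)^{\ell+1}\me{-\ell tH}$. Since $H$ is self-adjoint and $\me{-\ell tH}$ has a real symmetric kernel, $(1-D_t)^*$ is the same finite combination. Applying \eqref{eq:heatmax} with $L=N$ and the triangle inequality for $\cn_{q,r/2}$ yields the claim, with constant depending on $N$.
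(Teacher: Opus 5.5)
Your proposal is correct and follows essentially the same route as the paper. Both arguments use the pointwise bound \eqref{eq:HK}, split $\phi$ into the annuli $A_j(z,r)$, apply H\"older with $p_0$ and $q=p_0'$ together with the local weight bound $\|\one_B w_r\|_q\lesssim R^{d/q}$ (the paper packages this via Lemma~\ref{lem:weight} and Lemma~\ref{lem:heatloc}), sum the series in $2^{-j\alpha}$, and finish with the binomial expansion of $1-D_t$. You also supply the proof of the weight bound that the paper omits, and you make explicit two points it leaves implicit: the comparison $w_{(\ell t)^{1/\alpha}}\lesssim_L w_r$, and the use of balls of radius $r/2<r$.
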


We first record a simple weighted estimate, whose proof we omit.

\begin{lemma}
  \label{lem:weight}
  For $0\leq b<d$ and $R\geq r>0$, we have
  \begin{align*}
    \sup_{z\in\R^d}\int_{B(z,R)}\left(1+\frac{r}{|x|}\right)^b\,dx
    \lesssim_{b,d} R^d.
  \end{align*}
  In particular, for $q\max\{\delta,0\}<d$, we have
  \begin{align*}
    \|\one_{B(z,R)}w_r\|_q \lesssim R^{d/q}
  \end{align*}
  uniformly for $R\geq r$.
\end{lemma}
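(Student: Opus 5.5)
The statement to prove is Lemma~\ref{lem:weight}. Both claims follow from a direct computation, splitting the ball $B(z,R)$ according to whether $|x|\le r$ or $|x|>r$.

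\emph{First claim.} On the region $|x|>r$ we have $r/|x|<1$, so the integrand is at most $2^b$. This part of the integral is therefore bounded by $2^b|B(z,R)|\lesssim R^d$.

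On the region $|x|\le r$ we have $1+r/|x|\le 2r/|x|$, so the integrand is at most $2^b r^b|x|^{-b}$. We drop the restriction to $B(z,R)$ and integrate over the whole ball $B(0,r)$. Since $b<d$, $|x|^{-b}$ is integrable near the origin, and
\begin{align*}
  \int_{B(0,r)} r^b|x|^{-b}\,dx = c_{b,d}\, r^b\, r^{d-b} = c_{b,d}\, r^d.
\end{align*}
The hypothesis $R\ge r$ gives $r^d\le R^d$. Adding the two regions yields the bound $\lesssim_{b,d}R^d$, uniformly in $z$.

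\emph{Second claim.} Write $\delta_+:=\max\{\delta,0\}$. We first note a pointwise bound on the weight:
\begin{align*}
  w_r(x)=\min\{1,|x|/r\}^{-\delta}\le \bigl(1+r/|x|\bigr)^{\delta_+}.
\end{align*}
This holds in each case. If $\delta\le 0$, then $w_r\le 1$, since the base $\min\{1,|x|/r\}$ is at most $1$. If $\delta>0$ and $|x|\ge r$, then $w_r=1$. If $\delta>0$ and $|x|<r$, then $w_r=(r/|x|)^{\delta}\le(1+r/|x|)^{\delta}$.

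Raising this to the power $q$ gives $w_r^q\le(1+r/|x|)^{q\delta_+}$. We now apply the first claim with $b=q\delta_+$, which is admissible because $0\le q\delta_+<d$ by assumption. This gives $\|\one_{B(z,R)}w_r\|_q^q\lesssim R^d$, and taking $q$-th roots yields $\|\one_{B(z,R)}w_r\|_q\lesssim R^{d/q}$, uniformly for $R\ge r$.

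There is no real obstacle. The only point needing care is that the singular part near the origin is controlled by $r^d$, and this is absorbed into $R^d$ precisely because of the hypothesis $R\ge r$.
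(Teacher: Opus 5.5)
Your proof is correct. The paper omits the proof of this lemma as routine, and your argument is the natural direct computation it leaves to the reader: you split the ball according to $|x|\le r$ versus $|x|>r$, use $R\ge r$ to absorb the singular part, and then apply the first claim with $b=q\max\{\delta,0\}$ via the pointwise bound $w_r\le(1+r/|x|)^{\max\{\delta,0\}}$, in the same spirit as the proof of Lemma~\ref{lem:cubew}.
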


Next, we prove another smoothing offdiagonal estimate for the heat kernel of $H$.

\begin{lemma}
  \label{lem:heatloc}
  Fix $L\in\N$. Then, uniformly in $1 \leq \ell \leq L$, $t>0$, $r=t^{1/\alpha}$, $z\in\R^d$, and $j\in\N_0$,
  \begin{align}
    \label{eq:heatloc}
    \|\one_{B(z,r/2)}\me{-\ell t H}\one_{A_j(z,r)}\|_{p_0 \to q}
    \lesssim_L r^{-d(1/p_0-1/q)} 2^{-j(d+\alpha-d/q)}.
  \end{align}
\end{lemma}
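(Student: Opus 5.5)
The plan is to work directly with the pointwise heat kernel bounds \eqref{eq:HK} at time $s=\ell t$. Since $1\le\ell\le L$, the corresponding length scale $s^{1/\alpha}=\ell^{1/\alpha}r$ is comparable to $r$, up to constants depending only on $L$. For $x\in B(z,r/2)$ and $y\in A_j(z,r)$, \eqref{eq:HK} gives
\begin{align*}
  |\me{-\ell tH}(x,y)|\lesssim_L \frac{t}{(r+|x-y|)^{d+\alpha}}\,w_r(x)w_r(y),
\end{align*}
where the comparison $w_{\ell^{1/\alpha}r}\sim_L w_r$ is used. The bound then splits according to $j$.

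\emph{Case $j\ge2$.} Here $|x-y|\ge 2^{j-1}r-r/2\gtrsim 2^jr$, so the kernel is bounded by $t(2^jr)^{-d-\alpha}w_r(x)w_r(y)=r^{-d}2^{-j(d+\alpha)}w_r(x)w_r(y)$. The operator is then dominated by a rank-one expression. Taking the $L^q(B(z,r/2))$ norm in $x$ and applying H\"older in $y$ against $\one_{A_j}w_r\in L^{p_0'}=L^q$ gives the bound
\begin{align*}
  r^{-d}2^{-j(d+\alpha)}\,\|\one_{B(z,r/2)}w_r\|_q\,\|\one_{B(z,2^jr)}w_r\|_q .
\end{align*}
Lemma~\ref{lem:weight} applies because $q\max\{\delta,0\}<d$ by \eqref{eq:range}. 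It yields $\|\one_{B(z,2^jr)}w_r\|_q\lesssim(2^jr)^{d/q}$, and also $\|\one_{B(z,r/2)}w_r\|_q\lesssim r^{d/q}$: this follows by enlarging the ball to $B(z,r)$, or by rescaling $x\mapsto rx$, which reduces to the $r=1$ case since $w_r(rx)=w_1(x)$. Altogether this gives $r^{-d+2d/q}2^{-j(d+\alpha-d/q)}$. Since $1/p_0=1-1/q$, we have $-d+2d/q=-d(1/p_0-1/q)$, which is exactly \eqref{eq:heatloc}.

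\emph{Case $j\in\{0,1\}$.} Both sets lie in $B(z,2r)$, and the claim reduces to
\begin{align*}
  \|\one_{B(z,r/2)}\me{-\ell tH}\one_{B(z,2r)}\|_{p_0\to q}\lesssim r^{-d(1/p_0-1/q)} .
\end{align*}
By scaling $x\mapsto rx$, which turns $H$ into $r^{-\alpha}H$ by homogeneity, it suffices to take $r=1$. We then bound the kernel by $(1+|x-y|)^{-d-\alpha}w_1(x)w_1(y)$, and the factor $(1+|x-y|)^{-d-\alpha}$ can simply be replaced by $1$. This again gives a rank-one bound, namely $\|\one_{B(z,1/2)}w_1\|_q\,\|\one_{B(z,2)}w_1\|_{p_0'}$, which is finite uniformly in $z$ by Lemma~\ref{lem:weight} with $p_0'=q$.

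For $\kappa\le0$ we have $w_r\le1$ (since $\delta\le0$), so the same argument goes through trivially.

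The main subtlety, rather than any real obstacle, is the local case. For $\kappa>0$ the singular weights at the origin prevent a Young-type convolution argument, so one has to see that the weight norms close exactly. This works only because $q<d/\delta_+$, which is equivalent to $p_0>p_-$ and matches the duality $p_0'=q$. The same closing of the weights is what makes the $j$-exponent come out as $d+\alpha-d/q$ rather than something worse.
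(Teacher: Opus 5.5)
Your proof is correct and follows the paper's argument. The pointwise bound \eqref{eq:HK} dominates the kernel by the rank-one expression $r^{-d}2^{-j(d+\alpha)}w_r(x)w_r(y)$, and H\"older's inequality together with Lemma~\ref{lem:weight} (using $p_0'=q<d/\max\{\delta,0\}$) closes the estimate; your explicit checks that $w_{\ell^{1/\alpha}r}\lesssim_L w_r$ and of the cases $j\in\{0,1\}$ just spell out what the paper calls a ``fixed adjustment'' for small $j$.
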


\begin{proof}
  For $u\in B(z,r/2)$ and $v\in A_j(z,r)$, the free heat kernel factor in \eqref{eq:HK} is $\lesssim r^{-d} 2^{-j(d+\alpha)}$, with a fixed adjustment for $j=0,1,2$. Hence, by H\"older's inequality,
  \begin{align*}
    \|\one_{B(z,r/2)} \me{-\ell t H}\one_{A_j(z,r)}\|_{p_0\to q}
    & \lesssim_L r^{-d} 2^{-j(d+\alpha)} \|\one_{B(z,r/2)}w_r\|_q \|\one_{A_j(z,r)}w_r\|_{p_0'} \\
    & \lesssim_L r^{-d} 2^{-j(d+\alpha)} \cdot r^{d/q}(2^jr)^{d/q}.
  \end{align*}
  Using $p_0'=q$ and $2d/q-d=-d(1/p_0-1/q)$ concludes the proof.
\end{proof}

We are now ready to give the

\begin{proof}[Proof of Proposition~\ref{prop:heatmax}]
  We decompose $\phi=\sum_{j\geq0}\one_{A_j(z,r)}\phi$. Since $|x-z|<r/2$ implies that the $j$-th summand has support contained in $B(x,C2^jr)$ for some $C>0$, we have
  $$
  \|\one_{A_j(z,r)}\phi\|_{p_0} \lesssim (2^jr)^{d/p_0} (\cm_{p_0}\phi)(x).
  $$
  By Lemma~\ref{lem:heatloc},
  \begin{align*}
    \|\one_{B(z,r/2)}\me{-\ell tH}\phi\|_q
    & \lesssim_L r^{-d(1/p_0-1/q)}r^{d/p_0}\sum_{j\geq0}2^{-j(d+\alpha-d/q-d/p_0)}(\cm_{p_0}\phi)(x) \\
    & \lesssim_L r^{d/q} \sum_{j\geq0}2^{-j\alpha}(\cm_{p_0}\phi)(x),
  \end{align*}
  which is \eqref{eq:heatmax} after dividing both sides by $r^{d/q}$. Using
  $$
  1-(1-\me{-t H})^N=\sum_{\ell=1}^N(-1)^{\ell+1}\binom{N}{\ell}\me{-\ell tH},
  $$
  the triangle inequality, and \eqref{eq:heatmax}, shows also \eqref{eq:smoothmax}.
\end{proof}


\section{The difficult part in the Calder\'on--Zygmund decomposition}
\label{s:baddifficult}

The main result of this section is the following theorem, which allows to treat the difficult part in the Calder\'on--Zygmund decomposition. It is the main novel technical contribution in this paper and it seems to be difficult (if not impossible) to prove with presently available complex-time heat kernel bounds. Recall $D_t = (1-\me{-tH})^N$ and $S_*$ is defined in \eqref{eq:indices}.

\begin{theorem}
  \label{thm:high-regularity-cancellation}
  Assume $d\in\N$, $\alpha\in(0,\min\{2,d\})$, $\kappa\leq\kappa_{\rm c}$, $s>S_*$, $p_-<p_0<2$, $q=p_0'$, and $N\in\N$ is such that \eqref{eq:Ncond} holds, i.e., $N>d/(\alpha q)$. If $F:[0,\infty)\to\C$ is measurable with $\fh_s(F)<\infty$, then for every $t>0$, $r=t^{1/\alpha}$, $x,y\in\R^d$ with $|x-y|<r/2$ and $f\in L^2(\R^d)$,
  \begin{align}
    \label{eq:high-regularity-cancellation}
    \cn_{q,r/2}\left(\left[F(H)D_t\right]^* \one_{B(y,4r)^c} f \right)(y)
    \lesssim_{d,\alpha,\kappa,p_0,s,N,\psi,\chi} \fh_s(F)\,(\cm_2 f)(x).
  \end{align}
  The implicit constant is independent of $t,x,y,f,F$.
  The same estimate is valid for $F(H)D_t$ instead of $\left[F(H)D_t\right]^*$.
\end{theorem}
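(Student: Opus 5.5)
We plan to reduce by scaling to $t=1$, and then handle the frequency pieces of $F$ one at a time. The operator $H$ is homogeneous of degree $\alpha$ under the dilations $(U_\lambda f)(x)=\lambda^{d/2}f(\lambda x)$. Both $\fh_s$ and the maximal quantities $\cn_{q,r/2}$ and $\cm_2$ transform compatibly under these dilations. We may therefore assume $t=r=1$, after replacing $F$ by $F(t^{-1}\cdot)$, which leaves $\fh_s(F)$ unchanged. Since $\fh_s$ is invariant under complex conjugation and $D_1$ is self-adjoint, the adjoint case reduces to the non-adjoint one, so we estimate $\|\one_{B(y,1/2)}F(H)D_1\one_{B(y,4)^c}f\|_q$. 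We write $F=\sum_{k\in\Z}F\psi(2^{-k}\cdot)$ and set $m_k(\lambda)=F(2^k\lambda)\psi(\lambda)$, so that $F(H)D_1=\sum_k m_k(2^{-k}H)(1-\me{-H})^N$ and $\|m_k\|_{H^s}\lesssim\fh_s(F)$.

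For $k\le 0$ we use the cancellation in $D_1$. We factor $(1-\me{-H})^N=(H^{N}\me{-H})\cdot g(H)$ with $g(\lambda)=((1-\me{-\lambda})/\lambda)^N$ bounded and smooth. On the support of $m_k(2^{-k}\cdot)$ we have $\lambda\sim 2^k$, which produces a gain of $2^{kN}$. We then rescale each piece to unit scale, i.e.\ time $2^{-k}$ and radius $\rho_k=2^{-k/\alpha}\ge1$. At that scale the piece becomes $\tilde m(H)$ with $\tilde m$ supported in a fixed compact interval and with $C^{L+1}$ norms bounded by $\|m_k\|_{H^s}\lesssim\fh_s(F)$, using Sobolev embedding and the fact that $s>S_*$ is large. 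The decisive estimate is the unit-scale cube bound
\begin{align*}
  \|\one_{Q_a}\tilde m(H)\one_{Q_b}\|_{2\to q}\lesssim \fh_s(F)(1+|a-b|)^{-\sigma},\qquad \sigma>d,
\end{align*}
which follows by writing $\tilde m(H)=\me{-H}\phi(K)$ and combining \eqref{eq:Ecube} with the key inequality \eqref{eq:key-intro} for $\phi(K)$. The latter comes from Helffer--Sj\"ostrand together with the Jaffard-type estimate \eqref{eq:key-intro2}, and we will invoke both from Section~\ref{s:hs-calculus}. Summing over cubes gives $L^2\to L^q$ bounds with polynomial off-diagonal decay at scale $\rho_k$. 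Pairing $\one_{B(y,1/2)}$ with the dyadic annuli $A_j(y,4)$, as in the proof of Proposition~\ref{prop:heatmax}, and using $\|\one_{A_j}f\|_2\lesssim 2^{jd/2}\cm_2f(x)$, the bound for the $k$-th piece becomes
\[
\lesssim 2^{kN}\rho_k^{d/q}\,\fh_s(F)\,\cm_2f(x)=2^{k(N-d/(\alpha q))}\,\fh_s(F)\,\cm_2f(x).
\]
The factor $\rho_k^{d/q}$ comes from the weight $w$ and from H\"older's inequality on $B(y,1/2)$. Since $N>d/(\alpha q)$ by \eqref{eq:Ncond}, the geometric series over $k\le0$ converges.

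For $k>0$ the scale $\rho_k<1$ is small, and we drop $D_1$, noting that $\|D_1\|\le1$ and that $D_1$ commutes with everything. Here the smallness comes from the separation $|u-v|\gtrsim 2^j\ge\rho_k2^j$ between $B(y,1/2)$ and $B(y,4)^c$. The off-diagonal decay $(1+|a-b|)^{-\sigma}$ at scale $\rho_k$ yields a factor $(\rho_k/2^j)^{\sigma-d/2}$, up to Jacobian factors, for the $j$-th annulus. The powers of $\rho_k$ coming from the $L^2\to L^q$ normalisation at small scale, namely $\rho_k^{d/q-d/2}$, and from the weights $w_{\rho_k}$, which are harmless via Lemma~\ref{lem:cubew} with $q\delta<d$, are beaten by $\rho_k^{\sigma-d/2}$ as soon as $\sigma$ is large. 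This is again guaranteed by $s>S_*$, so both the sum over $j$ and the sum over $k>0$ converge.

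The main obstacle is making \eqref{eq:key-intro} quantitative enough. It requires a decay exponent $\sigma$ large compared with $d$, together with constants depending only on $\|\tilde m\|_{C^{L+1}}\lesssim\fh_s(F)$. This rests on the operator-valued Jaffard lemma with a polynomial dependence of the bound on $|\im z|^{-1}$, which the almost-analytic extension must absorb. One difficulty is that \eqref{eq:Kcube} only gives decay of order $d+\alpha$. To obtain larger $\sigma$, we would use the decay of $\me{-H}$ on both sides, or accept $\sigma=d+\alpha$ and check that the $k>0$ sum still closes. The latter works if the separation factor $(\rho_k 2^{-j})^{\alpha+d/2}$ dominates $\rho_k^{d/q-d/2}$, i.e.\ if $\alpha+d-d/q>0$, which holds.
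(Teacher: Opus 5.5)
Your overall architecture matches the paper's. You decompose $F$ dyadically, gain $\min\{1,t2^k\}^N$ from $D_t$ at low frequencies, use spatial separation at high frequencies, obtain unit-scale cube bounds from $m(H)=\me{-H}\phi(K)$ and Lemma~\ref{lem:HS}, and sum using $N>d/(\alpha q)$. The paper does the single-scale step as Proposition~\ref{prop:single} and Theorem~\ref{thm:window} rather than rescaling to $t=1$.

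However, one step fails as written. For $k>0$ you drop $D_1$ because $\|D_1\|_{2\to2}\le1$ and $D_1$ ``commutes with everything''. What you must bound is the localized off-diagonal $L^2\to L^q$ norm of $\one_{B(y,1/2)}m_k(2^{-k}H)D_1\one_{B(y,4)^c}$. The operator $D_1$ commutes with functions of $H$, but not with these spatial cutoffs. Bounding the nonlocal factor $D_1$ between the cutoffs only by its $L^2$ norm destroys both the spatial decay and the $L^2\to L^q$ smoothing you rely on. The conclusion is true, but only if you keep $D_1$ inside the symbol. The piece is $\widetilde m_k(2^{-k}H)$ with $\widetilde m_k(\mu)=\psi(\mu)F(2^k\mu)(1-\me{-2^k\mu})^N$. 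Every $\mu$-derivative of $(1-\me{-2^k\mu})^N$ is bounded on $\supp\chi$ uniformly in $k\ge0$, so $\|\widetilde m_k\|_{H^s}\lesssim\fh_s(F)$. This is Lemma~\ref{lem:product} with $u=2^k\ge1$. The same lemma with $u\le1$ is also the rigorous form of your ``gain of $2^{kN}$'' for $k\le0$: that gain must be realized as a bound $\lesssim 2^{kN}\fh_s(F)$ on the $C^{L+1}$ (or $H^s$) norm of the rescaled symbol, not as an operator-norm factor.

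Your exponent bookkeeping for $k>0$ is also off, though the conclusion survives. Covering $B(y,1/2)$ by about $\rho_k^{-d}$ cubes of side $\rho_k$ contributes a factor $\rho_k^{-d/q}$; this is the $K^{1/q}$ in the paper's proof of Proposition~\ref{prop:single}. Including it, with decay $d+\alpha$, the $j$-th annulus contributes $\rho_k^{\alpha}2^{-j\alpha}$. Hence the $k$-th piece is $O(2^{-k})$, matching the paper's factor $\sigma/t$. Your criterion $\alpha+d-d/q>0$ is not the relevant condition. What matters is that the net exponent $\alpha$ is positive, so $\sigma=d+\alpha$ suffices and no larger off-diagonal decay is needed. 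For $k\le0$, the factor $\rho_k^{d/q}$ comes from the normalization $\rho_k^{-d(1/2-1/q)}$ times the $L^2$ mass $\rho_k^{d/2}\cm_2f(x)$ at scale $\rho_k$. The weight $w$ is already absorbed in \eqref{eq:Ecube}.
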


The proof idea is classical and follows \cite{BlunckKunstmann2003}.
The starting point is a dyadic decomposition of $F(H)D_t$ involving compactly supported pieces
$$
m_{k,t}(2^{-k}H)=\psi(2^{-k}H)F(H)(1-\me{-tH})^N.
$$
In Lemma~\ref{lem:product} below, we show that the dyadic pieces have the required regularity, i.e.,
\begin{align}
  \label{eq:product-intro}
  \|m_{k,t}\|_{H^s} \lesssim \fh_s(F)\min\{1,(t2^k)^N\}.
\end{align}
Let us then consider $m\in H^s(\R)$ which is supported in some compact interval $I\subset(0,\infty)$. The central ingredient to show \eqref{eq:high-regularity-cancellation} is the estimate
\begin{align}
  \label{eq:window-intro}
  \|\one_{B(z,\rho)}m(\rho^\alpha H)\one_{A_j(z,\rho)}\|_{2\to q} \lesssim_{I,s,q} \rho^{-d(1/2-1/q)} 2^{-j(d/2+\alpha)}\|m\|_{H^s}.
\end{align}
We will prove it in Theorem~\ref{thm:window}; for $\alpha=2$, one can use complex-time heat kernel estimates to prove it, but this seems to be impossible for $\alpha<2$. With \eqref{eq:window-intro} at hand, it is a classical matter to show for $|x-y|<r/2$ and $f\in L^2$,
\begin{align}
  \label{eq:single-intro}
  \cn_{q,r/2}\left(m(\sigma H)^*\one_{B(y,4r)^c}f\right)(y)
  \lesssim \|m\|_{H^s} (\cm_2f)(x)
  \begin{cases}
    (\sigma/t), & \quad 0 < \sigma \leq t, \\
    (\sigma/t)^{d/(\alpha q)}, & \quad \sigma \geq t,
  \end{cases}
\end{align}
which is the single-scale version of the claimed bound \eqref{eq:high-regularity-cancellation}. We prove it in Proposition~\ref{prop:single}.
Thus, assuming \eqref{eq:window-intro}, \eqref{eq:single-intro}, and \eqref{eq:product-intro}, let us now give the

\begin{proof}[Proof of Theorem~\ref{thm:high-regularity-cancellation}]
  We use the dyadic partition of unity $\sum_{k\in\Z}\psi(2^{-k}\lambda)=1$ for all $\lambda>0$ to decompose $F(H)D_t$ into dyadic pieces. To this end, define
  \begin{align}
    m_{k,t}(\lambda) := \psi(\lambda) F(2^k \lambda) \left(1-\exp\left(-t2^k \lambda\right)\right)^N, \quad \lambda>0.
  \end{align}
  Since $m_{k,t}(2^{-k}H)=\psi(2^{-k}H) F(H)\left(1-\me{-tH}\right)^N$, the spectral theorem in $L^2$ yields
  \begin{align}
    F(H)D_t = \sum_{k\in \Z}m_{k,t}(2^{-k}H).
  \end{align}
  By Lemma~\ref{lem:product} (with $u=t2^k$ there),
  \begin{align}
    \label{eq:product-intro-proof}
    \|m_{k,t}\|_{H^s} \lesssim \fh_s(F) \min\{1,t2^k\}^N, \quad k\in\Z,\,t>0.
  \end{align}
  By absolute continuity of the spectral measure of $H$, we may replace each $m_{k,t}$ by its continuous representative without changing $m_{k,t}(2^{-k}H)$.
  Thus, by Proposition~\ref{prop:single} with $\sigma=2^{-k}$ applied to $m_{k,t}(2^{-k}H)^*$,
  \begin{align}
    \label{eq:ksum}
    \begin{split}
      & \cn_{q,r/2}\left(m_{k,t}(2^{-k}H)^*\one_{B(y,4r)^c}f\right)(y) \\
      & \quad \leq C \fh_s(F)(\cm_2f)(x) \min\left\{1,(t2^k)^N\right\}
        \begin{cases}
          (t2^k)^{-d/(\alpha q)}, & \quad t2^k \leq 1, \\
          (t2^k)^{-1}, & \quad t2^k \geq 1.
        \end{cases}
    \end{split}
  \end{align}
  Since $N>d/(\alpha q)$, both tails are summable in $k$ with
  \begin{align}
    \label{eq:dyadsum}
    \sup_{t>0}\left(\sum_{k:\, t2^k\leq1}(t2^k)^{N-d/(\alpha q)} + \sum_{k:\,t2^k>1}(t2^k)^{-1}\right)<\infty.
  \end{align}
  Thus, the triangle inequality and \eqref{eq:ksum}--\eqref{eq:dyadsum} conclude the proof of \eqref{eq:high-regularity-cancellation}.
\end{proof}

We now prove \eqref{eq:product-intro-proof}.

\begin{lemma}
  \label{lem:product}
  Let $I\subset(0,\infty)$ be a compact interval, possibly containing $\supp\chi$ in its interior, $s\geq0$, and $N\in\N$. With $E_u=(1-\me{-uv})^N$, we have for every $u>0$ and $G$ with $\chi G \in H^s$,
  \begin{align}
    \label{eq:product}
    \|\psi(\cdot)G(\cdot)E_u(\cdot)\|_{H^s} \lesssim_{s,N,\psi,\chi} \min\{1,u^N\}\|\chi(\cdot)G(\cdot)\|_{H^s}
  \end{align}
  with the implicit constant being independent of $u$.
\end{lemma}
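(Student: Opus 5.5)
The plan is to view \eqref{eq:product} as a multiplier estimate in $H^s(\R)$. Since $\chi\equiv1$ on a neighborhood of $\supp\psi$, we have $\psi G E_u=(\psi E_u)\cdot(\chi G)$. It therefore suffices to show
\begin{align*}
  \|\psi E_u\, g\|_{H^s}\lesssim \|\psi E_u\|_{C^{\lceil s\rceil}}\|g\|_{H^s}, \qquad
  \|\psi E_u\|_{C^{\lceil s\rceil}}\lesssim_{s,N,\psi}\min\{1,u^N\}.
\end{align*}
The first inequality is the standard fact that multiplication by a compactly supported $C^{k}$ function, $k=\lceil s\rceil$, is bounded on $H^s(\R)$ with norm controlled by its $C^k$ norm. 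For integer $s$ this follows from the Leibniz rule. For non-integer $s$ it follows by complex interpolation between $H^{\lfloor s\rfloor}$ and $H^{\lceil s\rceil}$, with a fixed smooth function $\tilde\psi$ equal to one on $\supp\psi$ inserted if needed.

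For the second inequality, work on the fixed compact set $\supp\psi\subset[c_1,c_2]\subset(0,\infty)$. Write $E_u(v)=h(uv)^N$ with $h(w)=1-\me{-w}$, so that $\partial_v^j E_u(v)=u^j (h^N)^{(j)}(uv)$.

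\emph{Case $u\ge1$.} Every derivative of $h^N$ is bounded on $[0,\infty)$, but the factor $u^j$ is not. One uses instead that $(h^N)^{(j)}(w)$ for $j\ge1$ is a finite combination of $\me{-mw}$ with $m\ge1$. Hence $|u^j(h^N)^{(j)}(uv)|\lesssim u^j\me{-uc_1}\lesssim_{j} 1$ on $[c_1,c_2]$. Together with $|E_u|\le1$, this gives $\|\psi E_u\|_{C^k}\lesssim1$.

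\emph{Case $u\le1$.} Here $h^N$ vanishes to order $N$ at $0$, so $|(h^N)^{(j)}(w)|\lesssim w^{N-j}$ for $0\le w\le c_2$ and $j\le N$. For $j\le N$ this gives $|\partial_v^jE_u(v)|\lesssim u^j(uv)^{N-j}\lesssim u^N$. For $j>N$, boundedness of $(h^N)^{(j)}$ gives $|\partial_v^jE_u|\lesssim u^j\le u^N$. So $\|\psi E_u\|_{C^k}\lesssim u^N$, and combining the two cases gives $\min\{1,u^N\}$.

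No serious obstacle is expected. The only points needing care are the exponential decay $u^j\me{-uc_1}$ that makes the bound uniform for large $u$, and the justification of the multiplier bound on $H^s$ for fractional $s$.
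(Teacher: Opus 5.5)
Your proof is correct and follows the paper's argument. Both use the factorization $\psi G E_u=(\psi E_u)(\chi G)$, reduce to a $C^k$-multiplier bound on $H^s$, and treat the same two cases: $u\le1$ via the order-$N$ vanishing of $(1-\me{-w})^N$ at $w=0$ (the paper writes this as $E_u(v)=(uv)^N(\int_0^1\me{-\tau uv}\,d\tau)^N$), and $u\ge1$ via the binomial expansion and boundedness of $u^j\me{-\ell uv}$ on $\supp\chi$. The only difference is cosmetic: you justify the multiplier bound with $k=\lceil s\rceil$ via Leibniz and complex interpolation, while the paper cites it with $k>s+1$.
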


\begin{proof}
  Since $\psi\chi=\psi$, we can write $\psi G E_u=(\psi E_u)(\chi G)$. Thus, it remains to show
  $$
  \|(\psi E_u)f\|_{H^s}\lesssim\min\{1,u^N\}\|f\|_{H^s}, \quad f\in H^s.
  $$
  Recall that for any fixed compact $K\subset\R$, $s\geq0$, and any integer $k>s+1$, by Sobolev embedding,
  $$
  \|hf\|_{H^s}\lesssim_{s,k,K}\|h\|_{C^k}\|f\|_{H^s}, \quad h\in C_c^k(K).
  $$
  Hence, it suffices to show
  \begin{align}
    \label{eq:product-aux}
    \|E_u\|_{C^k(\supp\chi)}\lesssim_k \min\{1,u^N\}.
  \end{align}
  For $0< u\leq1$, on $\supp\psi$, we write
  \begin{align*}
    E_u(v) = (uv)^N \left(\int_0^1 \me{-\tau uv}\,d\tau\right)^N.
  \end{align*}
  All $v$-derivatives of $u^{-N}E_u(v)$ of any fixed finite order are uniformly bounded on $\supp\chi$. For $u\geq1$, we expand
  \begin{align*}
    E_u(v) = \sum_{\ell=0}^N(-1)^\ell\binom{N}{\ell}\me{-\ell uv}.
  \end{align*}
  The $\ell=0$-term is constant. On the other hand, for every $k\in\N_0$ and $v\geq a>0$, the quantity $u^k\me{-\ell uv}$ (arising from the $k$-th $v$-derivative of the $\ell$-th summand) is uniformly bounded for $\ell\in\{1,...,N\}$. This shows \eqref{eq:product-aux} and concludes the proof.
\end{proof}

We now prove \eqref{eq:single-intro} using \eqref{eq:window-intro}, which is proved in Theorem~\ref{thm:window} below.

\begin{proposition}
  \label{prop:single}
  Let $s>S_*$, $p_-<p_0<2$, and $q=p_0'$.
  Let $m\in H^s(\R)$ be supported in a fixed compact interval $I\subset(0,\infty)$. Let $\sigma,t>0$, and $r=t^{1/\alpha}$. Then, for $|x-y|<r/2$ and $f\in L^2(\R^d)$,
  \begin{align}
    \label{eq:single}
    \cn_{q,r/2}\left(m(\sigma H)^*\one_{B(y,4r)^c}f\right)(y)
    \lesssim \|m\|_{H^s}(\cm_2f)(x)
    \begin{cases}
      (\sigma/t), & \quad 0<\sigma\leq t, \\
      (\sigma/t)^{d/(\alpha q)}, & \quad \sigma \geq t.
    \end{cases}
  \end{align}
  The same estimate also holds with $m(\sigma H)^*$ replaced with $m(\sigma H)$.
\end{proposition}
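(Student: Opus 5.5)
We derive \eqref{eq:single} from the window estimate \eqref{eq:window-intro} (Theorem~\ref{thm:window}) at scale $\rho=\sigma^{1/\alpha}$, combined with an annular decomposition of $f$ around $y$. First note that $m(\sigma H)^*=\bar m(\sigma H)$ and $\|\bar m\|_{H^s}=\|m\|_{H^s}$, so it suffices to treat $m(\sigma H)$. Set $\rho:=\sigma^{1/\alpha}$, so that $\rho/r=(\sigma/t)^{1/\alpha}$. We must bound $r^{-d/q}\|\one_{B(y,r/2)}m(\sigma H)\one_{B(y,4r)^c}f\|_q$ by the right side of \eqref{eq:single}. Decompose $f\one_{B(y,4r)^c}=\sum_{j\ge3}\one_{A_j(y,r)}f$ with $A_j(y,r)=B(y,2^jr)\setminus B(y,2^{j-1}r)$. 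Because $|x-y|<r/2$, we have $\|\one_{A_j(y,r)}f\|_2\lesssim(2^jr)^{d/2}(\cm_2f)(x)$, exactly as in the proof of Proposition~\ref{prop:heatmax}.

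\emph{Case $\sigma\ge t$, i.e.\ $\rho\ge r$.} Here $B(y,r/2)\subset B(y,\rho)$. Group the annuli $A_j(y,r)$ into annuli at scale $\rho$. Those lying inside $B(y,4\rho)$ are covered by $A_0(y,\rho)\cup A_1(y,\rho)\cup A_2(y,\rho)$, and those outside are covered by the $A_k(y,\rho)$ with $k\ge3$. This amounts to writing $\one_{B(y,4r)^c}f=\sum_{k\ge0}\one_{A_k(y,\rho)}\one_{B(y,4r)^c}f$. Applying \eqref{eq:window-intro} with $z=y$ and the ball $B(y,\rho)\supset B(y,r/2)$ gives
\[
\|\one_{B(y,r/2)}m(\sigma H)\one_{A_k(y,\rho)}f\|_q\lesssim\rho^{-d(1/2-1/q)}2^{-k(d/2+\alpha)}(2^k\rho)^{d/2}(\cm_2f)(x)\|m\|_{H^s}.
\]
Summing over $k$ gives $\rho^{d/q}(\cm_2f)(x)\|m\|_{H^s}$, since the $k$-sum $\sum_k 2^{-k\alpha}$ converges. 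Dividing by $r^{d/q}$ yields the factor $(\rho/r)^{d/q}=(\sigma/t)^{d/(\alpha q)}$.

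\emph{Case $\sigma\le t$, i.e.\ $\rho\le r$.} The target ball $B(y,r/2)$ is now larger than the window scale, so we cover it by $\sim(r/\rho)^d$ balls $B(z_i,\rho)$ with centers $z_i\in B(y,r/2)$. For $v\in A_j(y,r)$ with $j\ge3$ we have $|z_i-v|\sim2^jr$. Hence $A_j(y,r)\subset\bigcup_{k:\,2^k\rho\sim2^jr}A_k(z_i,\rho)$, a union of boundedly many annuli. Applying \eqref{eq:window-intro} at center $z_i$ gives
\[
\|\one_{B(z_i,\rho)}m(\sigma H)\one_{A_j(y,r)}f\|_q\lesssim\rho^{-d(1/2-1/q)}\Big(\tfrac{2^jr}{\rho}\Big)^{-(d/2+\alpha)}(2^jr)^{d/2}(\cm_2f)(x)\|m\|_{H^s}.
\]
Now combine the pieces in $\ell^q$: since $q\ge1$, $\|\sum_i\one_{B(z_i,\rho)}g\|_q\le\big(\sum_i\|\one_{B(z_i,\rho)}g\|_q^q\big)^{1/q}$ by bounded overlap, and this costs a factor $(r/\rho)^{d/q}$. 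After summing over $j$ (again a convergent $\sum_j2^{-j\alpha}$) and dividing by $r^{d/q}$, the resulting power is
\[
\rho^{d/q-d/2+d/2+\alpha}\,r^{-\alpha-d/2+d/2}\,(r/\rho)^{d/q}\,r^{-d/q}=(\rho/r)^\alpha=\sigma/t,
\]
as claimed.

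The main obstacle is not this bookkeeping, which is routine once \eqref{eq:window-intro} is available. It is \eqref{eq:window-intro} itself: the weighted $L^2\to L^q$ off-diagonal decay $2^{-j(d/2+\alpha)}$ for a compactly supported multiplier. That estimate is deferred to Theorem~\ref{thm:window} and rests on the Helffer--Sj\"ostrand and Jaffard machinery, so here it is taken as given. Within the present argument, the only point needing care is that \eqref{eq:window-intro} holds uniformly in the center $z$ and in $\rho$. By scaling ($H$ is homogeneous of degree $-\alpha$, and the weight $w_\rho$ transforms accordingly), it reduces to $\rho=1$, where the uniformity in $z$ is built into the cube estimates of Proposition~\ref{prop:cubes}.
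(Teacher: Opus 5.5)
Your proposal is correct and follows the paper's proof. Both reduce to the window estimate of Theorem~\ref{thm:window}: when $\rho\ge r$ you decompose into annuli $A_k(y,\rho)$ around $y$, and when $\rho\le r$ you cover $B(y,r/2)$ by $\sim(r/\rho)^d$ balls of radius $\rho$ and sum the pieces in $\ell^q$. The only cosmetic difference is in the second case: you decompose into the annuli $A_j(y,r)$ and regroup each into boundedly many $A_k(z_i,\rho)$, whereas the paper decomposes directly into $A_j(z_\ell,\rho)$ and keeps only those with $2^j\rho\ge 3r$. The two bookkeepings are equivalent.
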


\begin{proof}
  Let $\rho=\sigma^{1/\alpha}$ and $f_y=\one_{B(y,4r)^c}f$. We distinguish between the cases $\rho \geq r$ (i.e., $\sigma\geq t$) and $0<\rho\leq r$ (i.e., $\sigma\leq t$).

  \medskip
  \noindent
  {\em Case $\rho\geq r$:}
  Here $B(y,r/2)\subset B(y,\rho)$.
  We decompose $f_y=\sum_{j\geq0}f_y\one_{A_j(y,\rho)}$.
  For $j\in\N_0$ and $|x-y|<r/2\leq\rho/2$, there is $C>0$ such that
  \begin{align*}
    A_j(y,\rho)\subset B(x,C2^j\rho), \qquad
    \|\one_{A_j(y,\rho)}f_y\|_2 \leq C(2^j\rho)^{d/2} (\mathcal M_2f)(x).
  \end{align*}
  We now apply Theorem~\ref{thm:window} (with $z=y$ there) to $\overline m$ and sum using the triangle inequality to obtain
  \begin{align*}
    \|\one_{B(y,r/2)}m(\sigma H)^*f_y\|_q
    & \lesssim \|m\|_{H^s} \rho^{-d(1/2-1/q)}\sum_{j\geq0}2^{-j(d/2+\alpha)} (2^j\rho)^{d/2} (\mathcal M_2 f)(x) \\
    & \lesssim \|m\|_{H^s} \rho^{d/q} (\mathcal M_2f)(x) \sum_{j\geq0}2^{-\alpha j}
      \lesssim \|m\|_{H^s} \rho^{d/q} (\mathcal M_2f)(x).
  \end{align*}
  Dividing both sides by $(r/2)^{d/q}$ proves the claimed bound.
  
  \medskip
  \noindent
  {\em Case $0<\rho\leq r$:}
  We cover $B(y,r/2)$ with $K\leq C_d(r/\rho)^d$ many balls $B(z_\ell,\rho)$ centered at $z_\ell\in B(y,r/2)$. Thus,
  $$
  \one_{B(y,r/2)} \leq \sum_{\ell=1}^K\one_{B(z_\ell,\rho)}.
  $$
  We decompose $f_y=\sum_{j\geq0}f_y\one_{A_j(z_\ell,\rho)}$.
  Note
  $$
  |v-z_\ell| \geq |v-y| - |y-z_\ell| > \frac{7r}{2}, \quad v\in\supp f_y.
  $$
  Hence, for $(\one_{A_j(z_\ell,\rho)}f_y)(v)$ to be nonzero, we necessarily have $2^j\rho \geq 3r$ (here $j\geq1$). On the other hand, if $2^j\rho \geq 3r$, then we also have $A_j(z_\ell,\rho)\subset B(x,C2^j\rho)$ for some $C>0$ since $|z_\ell-x|\leq|z_\ell-y|+|x-y|\leq r$. Hence,
  $$
  \|\one_{A_j(z_\ell,\rho)}f_y\|_2 \leq C (2^j\rho)^{d/2} (\mathcal M_2f)(x), \quad \text{if} \ 2^j\rho \geq3r.
  $$
  Thus, by Theorem~\ref{thm:window} (with $z=z_\ell$ there),
  \begin{align*}
    \|\one_{B(z_\ell,\rho)}m(\sigma H)^* f_y\|_q
    & \lesssim \|m\|_{H^s} \rho^{d/q}(\mathcal M_2f)(x) \sum_{2^j\rho\geq3r}2^{-j\alpha} \\
    & \lesssim \|m\|_{H^s} \rho^{d/q} (\rho/r)^\alpha (\mathcal M_2 f)(x).
  \end{align*}
  Now, using the covering $\one_{B(y,r/2)} \leq \sum_{\ell=1}^K\one_{B(z_\ell,\rho)}$, summing the $q$-th powers over the covering balls (which may overlap) yields,
  \begin{align*}
    \|\one_{B(y,r/2)}m(\sigma H)^* f_y\|_q
    & \lesssim K^{1/q} \|m\|_{H^s} \rho^{d/q} (\rho/r)^\alpha (\mathcal M_2f)(x) \\
    & \lesssim \|m\|_{H^s}r^{d/q}(\rho/r)^\alpha (\mathcal M_2f)(x).
  \end{align*}
  where we used $\rho\leq r$. Dividing both sides by $(r/2)^{d/q}$ yields the claimed bound.
  
  \smallskip
  Clearly, the above arguments also apply to $m(\sigma H)$ in place of $m(\sigma H)^*$.
\end{proof}


\section{Quantitative inversion of operator-valued matrices}
\label{s:inverse}

As explained in Subsection~\ref{ss:proofsketch}, it will be important to have quantitative control of $\one_{Q_a}(z-(1+H)^{-1})^{-1}\one_{Q_b}$ with respect to the distance $|a-b|$ between the cube centers $a,b\in\Z^d$. Quantitative control of the norm of the inverse of an invertible element in a given Banach (sub)algebra by its ``smoothness'' and ``spectral data'' is a classical topic in functional analysis, going back at least to \cite{Nikolski1999}\footnote{The term ``smoothness'' goes back to the study of norm-controlled inversion in $C(\mathbb{T})$ via Fourier series \cite{Nikolski1999}; see also the introduction in \cite{GrochenigKlotz2014}.}. A Banach subalgebra $\mathcal A$ of a Banach algebra $\mathcal B$ (with a common identity) is called inverse-closed, if $A\in\mathcal A$ has an inverse $A^{-1} \in \mathcal B$ implies that one also has $A^{-1}\in\mathcal A$. For a Banach subalgebra $\mathcal A$ which is inverse-closed in $\mathcal B$, one says $\mathcal A$ admits a norm-controlled inversion in $\mathcal B$ if there is a function $h:\R_+\times\R_+\to\R_+$ such that
\begin{align*}
  \|A^{-1}\|_{\mathcal A} \leq h(\|A^{-1}\|_{\mathcal B},\|A\|_{\mathcal A}) 
\end{align*}
for all $A\in\mathcal A$ with inverse $A^{-1}\in\mathcal B$. One then calls $\|A^{-1}\|_{\mathcal B}$ the ``spectral data'', while $\|A\|_{\mathcal A}$ measures ``smoothness'' of $A$.

\smallskip
The relevant Banach algebra in our case is $\cb(\cx)$, i.e., the algebra of bounded linear operators over $\cx=\ell^2(\Z^d;\ch)$ for a given Hilbert space $\ch$. (In our case, $\ch$ is $L^2([0,1)^d)$ by the identification of $L^2(\R^d)$ with $\ell^2(\Z^d;L^2([0,1)^d))$, but we do not assume this for now.) The ``smoothness'' then is measured in our case using the subalgebra $\J_s:=\J_s(\Z^d;\cb(\ch))$ equipped with the norm
\begin{align}
  \|A\|_{\J_s} := \sup_{a,b\in\Z^d}(1+|a-b|)^s\|A_{ab}\|_{\cb(\ch)}, \quad s>d.
\end{align}
Since $\sum_{n\in\Z^d}(1+|n|)^{-s}<\infty$, $\J_s$ is a unital Banach $*$-algebra when equipped with an equivalent submultiplicative norm, and a subalgebra of $\cb(\cx)$.
Thus, ``smoothness'' here means polynomial offdiagonal decay of $(A_{ab})_{ab}$, i.e., a measure of ``locality''; the higher $s$ is, the more local elements in $\J_s$ are. Thinking of the heat kernel $\me{-H}$ and \eqref{eq:HK}, one sees that a discretization of $\me{-H}$, i.e., $\one_{Q_a}\me{-H}\one_{Q_b}$ belongs to $\J_s$ for all $d<s\leq d+\alpha$ when we take $\ch=L^2([0,1)^d)$.

Quantitative inverse-closedness estimates for operator-valued matrices are proved, e.g., in \cite[Theorem~6]{Baskakov1997E}, where polynomial off-diagonal decay is measured using a one-dimensional index set. However, for $d>1$, an enumeration of $\Z^d$ does not in general preserve the distance estimates required in our situation to apply that theorem directly.
More recently, \cite{BaskakovKrishtal2014} develop a general theory of decay estimates for inverse operators, including quantitative bounds in Wiener classes \cite[Theorem~7.2]{BaskakovKrishtal2014} and accommodate higher dimensions. Our application requires bounds in $\J_s$, $s>d$, which preserve the decay exponent $s$ and depend polynomially on the inverse operator norm. Such bounds are not directly available in \cite[Theorem~7.2]{BaskakovKrishtal2014}.
Let us also mention the more recent results \cite[Corollary~4.5]{KohldorferBalazs2025W} and \cite[Theorem~II.12]{KohldorferBalazs2025} which prove qualitative inverse-closedness for the operator-valued Jaffard algebras considered here. In the scalar case $\ch=\C$, quantitative inverse-closedness with polynomial norm control is studied in \cite{GrochenigKlotz2014,FangShin2020}.
For instance, \cite[Theorem~2]{FangShin2020} shows polynomial norm-controlled inversion for scalar matrices over relatively separated subsets of $\R^d$. Taking $p=\infty$, $q=2$, and $r=s>d$ there, then \cite[Theorem~2]{FangShin2020} asserts for $s\neq d+1$,
$$
\|A^{-1}\|_{\J_s} \leq C_{d,s} \|A^{-1}\|\left(\|A\|_{\J_s} \|A^{-1}\|\right)^{s/\min\{1,s-d\}}.
$$
For $s=d+1$, the bound involves an additional logarithmic factor.
We adapt their proof to obtain the following quantitative operator-valued Jaffard lemma, which may be of independent interest.

\begin{theorem}
  \label{thm:QI}
  Let $\ch$ be a Hilbert space, $d\in\N$, $s>d$, and choose any $0<\beta<\min\{1,s-d\}$. Then, for every $A\in\J_s(\Z^d;\cb(\ch))$ which is invertible on $\cx=\ell^2(\Z^d;\ch)$, its inverse also belongs to $\J_s(\Z^d;\cb(\ch))$ with
  \begin{align}
    \label{eq:QI}
    \|A^{-1}\|_{\J_s}
    \lesssim_{d,s,\beta} \|A^{-1}\|_{\cb(\cx)} \left(1+\|A\|_{\J_s} \|A^{-1}\|_{\cb(\cx)}\right)^{s/\beta}.
  \end{align}
\end{theorem}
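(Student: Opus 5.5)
The approach is the coarse-localization/commutator method of \cite{FangShin2020}, with every scalar manipulation replaced by operator-norm estimates. Throughout, the cutoffs act as scalar multiplication on the index $a\in\Z^d$, so they commute with $\cb(\ch)$-entries. Write $B:=A^{-1}\in\cb(\cx)$ and $X:=\|A\|_{\J_s}\|B\|_{\cb(\cx)}$. For a bounded function $\theta:\Z^d\to[0,1]$, let $\Theta$ be the diagonal operator $(\Theta u)_a=\theta(a)u_a$. The target is the estimate
\begin{align*}
  \|B_{ab}\|_{\cb(\ch)}\lesssim \|B\|(1+X)^{s/\beta}(1+|a-b|)^{-s},
\end{align*}
which is \eqref{eq:QI} after taking the supremum over $a,b$.

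\emph{Step 1 (commutator bound).} Suppose $\theta$ is Lipschitz with constant $1/\ell$, $\ell\geq1$. Then $([A,\Theta])_{ab}=(\theta(b)-\theta(a))A_{ab}$. The Schur test, applied to the scalar majorant $\|A_{ab}\|$ and using $|\theta(a)-\theta(b)|\leq\min\{1,|a-b|/\ell\}\le (|a-b|/\ell)^{\beta}$, gives
\begin{align*}
  \|[A,\Theta]\|_{\cb(\cx)}\leq \|A\|_{\J_s}\sup_a\sum_{n\in\Z^d}\min\{1,|n|/\ell\}(1+|n|)^{-s}\lesssim_{d,s,\beta}\|A\|_{\J_s}\,\ell^{-\beta}.
\end{align*}
The sum converges because $\beta<1$ and $s-\beta>d$. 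This step is where the restriction $\beta<\min\{1,s-d\}$ enters.

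\emph{Step 2 (iterated localization).} Fix $a,b$ and set $R:=|a-b|/2\geq1$, together with an integer $k:=\lceil s/\beta\rceil$.
\begin{itemize}
  \item Choose nested cutoffs $\theta_0,\theta_1,\dots,\theta_k$. Each $\theta_i$ equals $1$ on $B(b,R(1-i/(2k)))$, vanishes outside $B(b,R(1-(i-1/2)/(2k)))$, and is Lipschitz with constant $\lesssim k/R$.
  \item Let $\Phi$ be the projection onto the point $a$ (or onto $B(a,R/2)$). Then $\Phi\Theta_i=0$ for all $i$, and $\Theta_{i}\Theta_{i-1}=\Theta_{i}$, i.e.\ $\Theta_{i-1}=1$ on $\supp\theta_i$.
  \item From $\Phi\Theta_i=0$ and $BA=1=AB$ we get $\Phi B\Theta_{i}=\Phi(B\Theta_i-\Theta_iB)=\Phi B[A,\Theta_i]B$.
  \item Since $[A,\Theta_i]$ is supported near $\supp\nabla\theta_i$, we can reinsert $\Theta_{i-1}$ after the commutator up to a controlled error; equivalently, we write $[A,\Theta_i]=[A,\Theta_i]\Theta_{i-1}+[A,\Theta_i](1-\Theta_{i-1})$.
  \item To avoid the error term, the cleaner route is to use $\Theta_i=\Theta_i\Theta_{i-1}$ on the right. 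Then
  \begin{align*}
    \Phi B\Theta_i=\Phi B\Theta_i\Theta_{i-1}\quad\Longrightarrow\quad\|\Phi B\Theta_i\Psi\|\leq\|\Phi B[A,\Theta_i]\|\,\|B\Theta_{i-1}\Psi\|.
  \end{align*}
    Here $\Psi$ is the projection onto $b$, and the inequality follows by expanding $\Phi B\Theta_i\Theta_{i-1}\Psi=\Phi B[A,\Theta_i]B\Theta_{i-1}\Psi$.
  \item Iterating $k$ times, with $\theta_0$ absorbed trivially, gives
  \begin{align*}
    \|B_{ab}\|\le \|\Phi B\Theta_k\Psi\|\leq \|B\|\bigl(C\|A\|_{\J_s}\|B\|(k/R)^{\beta}\bigr)^k=\|B\|\bigl(C'X R^{-\beta}\bigr)^k .
  \end{align*}
\end{itemize}
I expect the main bookkeeping obstacle to be arranging the nesting correctly in this step: each factor $B$ must sit between a commutator and the next cutoff, so that the identity $\Phi\Theta_i=0$ can be used repeatedly. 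If the direct chain does not close, the fallback is the form used in \cite{FangShin2020}, namely $\Theta_{i-1}B\Theta_i'$ with a complementary cutoff $\Theta_i'$. The only quantitative input in every iteration remains Step 1.

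\emph{Step 3 (choice of scale).} Set $R_0:=(C'X)^{1/\beta}$.
\begin{itemize}
  \item If $R\geq R_0$, then $C'XR^{-\beta}\leq1$. Since $k\beta\geq s$, this gives $(C'XR^{-\beta})^k\leq (C'XR^{-\beta})^{s/\beta}=(C'X)^{s/\beta}R^{-s}$.
  \item If $R< R_0$ (including $a=b$), we use the trivial bound $\|B_{ab}\|\leq\|B\|\leq\|B\|(R_0/R)^{s}\lesssim\|B\|(1+X)^{s/\beta}R^{-s}$, and $\|B_{ab}\|\leq\|B\|$ when $R<1$.
\end{itemize}
In both cases $\|B_{ab}\|\lesssim\|B\|(1+X)^{s/\beta}(1+|a-b|)^{-s}$. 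Taking the supremum over $a,b$ yields $B\in\J_s$ together with \eqref{eq:QI}.
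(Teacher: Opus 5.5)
Your Step~2 does not close, and the problem is more than bookkeeping. In the inequality $\|\Phi B\Theta_i\Psi\|\le\|\Phi B[A,\Theta_i]\|\,\|B\Theta_{i-1}\Psi\|$ the second factor is just $\|B\Psi\|$, since $\Theta_{i-1}\Psi=\Psi$. It has no $\Phi$ on its left, so the identity $\Phi\Theta=0$ can never be used again. The first factor can only be bounded by $\|B\|\,\|[A,\Theta_i]\|$. The chain therefore gives a single gain, $\|B_{ab}\|\lesssim\|B\|\,X\,(k/R)^{\beta}$, i.e.\ decay $|a-b|^{-\beta}$ with $\beta<1$, not $\|B\|(C'XR^{-\beta})^k$.

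To get an actual recursion you must insert the previous cutoff between $\Phi B$ and the commutator: $\Phi B[A,\Theta_i]=\Phi B\Theta_{i-1}[A,\Theta_i]+\Phi B(1-\Theta_{i-1})A\Theta_i$. The last term is exactly the error term you tried to sidestep. Because $A$ is not banded, $[A,\Theta_i]$ is not supported near the transition zone of $\theta_i$ (its $(a,b)$ entry is $A_{ab}$, for instance). The remainder $(1-\Theta_{i-1})A\Theta_i$ couples sites at distance $\gtrsim R/k$ across a whole sphere. By the Schur test its $\cb(\cx)$-norm is of size $\|A\|_{\J_s}(R/k)^{d-s}$, and for kernels like $(1+|m-n|)^{-s}$ it is genuinely that large. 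With $x_i:=\|\Phi B\Theta_i\|$, the honest recursion is $x_i\lesssim X(k/R)^{\beta}x_{i-1}+\|B\|\,X\,(R/k)^{d-s}$. So any argument of this type, which controls everything in $\cb(\cx)$-norm, caps the decay at $|a-b|^{-(s-d)}$ however large $k$ is. It loses $d$ in the exponent, whereas the point of the theorem is that $s$ is preserved.

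The missing idea is a mechanism that preserves the exponent. The paper's adaptation of Fang--Shin is a coarse-graining argument, not a nested-commutator chain. It localizes $f=Bg$ with a partition of unity $\Psi_k$ on blocks of side $L\sim(1+\|A\|_{\J_s}\|B\|)^{1/\beta}$ and bounds the block-to-block errors $\|[A,\Psi_k]P_j\|$:
\begin{itemize}
\item for neighbouring blocks, your Step~1 gives $\lesssim\|A\|_{\J_s}L^{-\beta}$;
\item for separated blocks, the bound is $\lesssim\|A\|_{\J_s}L^{d-s}(1+|k-j|)^{-s}$, so the block volume $L^d$ is lost only once while the decay in the coarse index keeps the full exponent $s$.
\end{itemize}
This yields the scalar inequality $u_k\le v_k+\sum_jV(k-j)u_j$ for $u_k=\|\Psi_kf\|$ and $v_k=\|B\|\,\|\Psi_kg\|$, with $\|V\|_{\ell^1}\le1/2$ and $\sup_h(1+|h|)^sV(h)\lesssim1$. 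Lemma~\ref{lem:renewal} then shows, by pigeonholing an increment of size at least $|h|/n$ in $V^{*n}$, that the Neumann series $W=\sum_nV^{*n}$ still decays like $(1+|h|)^{-s}$. Returning from coarse to unit blocks costs $L^s\sim(1+\|A\|_{\J_s}\|B\|)^{s/\beta}$. Your Steps~1 and~3 match the paper's commutator bound and choice of scale. Without this renewal structure, or some bootstrap on the decay of $B$ itself, Step~2 cannot deliver $(1+|a-b|)^{-s}$.
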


Let us explain the idea of the proof. The decay of $A_{nm}$ for $|n-m|$ large tells us that $A$ is local to some extent, i.e., it interacts rather weakly with distant lattice sites. The boundedness of $A^{-1}$ says that solving $Af=g$ is stable in $\ell^2$. However, considering, e.g., $g=\delta_m\otimes \phi$ ($m\in\Z^d$, $\phi\in\ch$), this does not yet inform us about the profile of $f$, in particular its localization, which we aim to measure in Theorem~\ref{thm:QI}. Our goal is thus to show that a localized $g$ produces a solution $f=A^{-1}g$ whose localization deteriorates polynomially with distance. This goal is similar to elliptic regularity: Invertibility of $1+\mathcal L$ with $\mathcal L=-\sum_{jk}\partial_{x_j}a_{jk}(x)\partial_{x_k}$ (with $a_{jk}$ smooth enough and uniformly elliptic), for example, provides global control of solutions $f$ to $(1+\mathcal L)f=g$ in terms of the data $g$. To upgrade this to higher regularity, one differentiates the equation $(1+\mathcal L)f=g$. In our case, smoothness is replaced with localization, hence we should replace differentiation with localization here. Thus, let $\Psi_k=\diag(\psi_k(n)\one_\ch)_{n\in\Z^d}$ with a bump function $\psi_k$ localizing at $k\in\Z^d$, whose support has diameter on the order of a length scale $L\gtrsim1$ to be determined.
We would then aim for a bound like $\|\Psi_k f\|_2 \lesssim \|A^{-1}\|_{\cb}\|\Psi_k g\|_2$. Such a bound is, however, too optimistic since $A^{-1}$ is nonlocal and this nonlocality produces localization errors $[A,\Psi_k]$ (similarly as in IMS localization \cite{Ismagilov1961,Morgan1979,MorganSimon1981} in many-body quantum mechanics, e.g., in \cite{Sigal1982}).
To control the magnitude of the localization errors, we assume that the $\psi_k$ localize on a rather coarse scale $L$. To return to the original unit scale, we iterate the eventual bound on $\|\Psi_k f\|_2$ and localize it to the original unit cubes. In addition to smallness, the iteration requires measuring the spatial decay of $[A,\Psi_k]$ through estimates on $\|[A,\Psi_k]\Psi_j\|$. Indeed, suppose $V(k-j) \gtrsim \|[A,\Psi_k]\Psi_j\| \|A^{-1}\|$. Then, with the above steps, we have shown $\|\Psi_kf\|\leq \|A^{-1}\|\|\Psi_k g\| + \sum_j V(k-j)\|\Psi_j f\|$. Importantly, we can show $\|V\|_{\ell^1}<1$ if the localization scale $L$ is suitably chosen; it will turn out to depend on both $\|A^{-1}\|_\cb$ and $\|A\|_{\J_s}$. In this case, the localization errors cannot amplify indefinitely, which allows the above-mentioned iteration and eventually return from the coarse-scale blocks to the unit-scale blocks. Let us now give the full

\begin{proof}[Proof of Theorem~\ref{thm:QI}]
  {\bf Step 1: Coarse partition of unity.}
  Let $L\in\{2,3,...\}$ to be chosen later in Step 4. Set $h(t)=(1-|t|)_+$ and, for $k,n\in\Z^d$, let
  \begin{align*}
    \psi_k(n)=\prod_{\nu=1}^dh(n_\nu/L-k_\nu).
  \end{align*}
  Note $\supp\psi_k\subset\{n:\,|n-Lk|_\infty\leq L\}$ and at most $2^d$ of the functions $\psi_k$ are nonzero at any given $n$. Moreover, we have the Lipschitz bound
  \begin{align}
    \label{eq:lip}
    |\psi_k(n)-\psi_k(m)| \leq C_d \min\left\{1,\frac{|n-m|}{L}\right\}.
  \end{align}
  With the $\psi_k$, we introduce
  \begin{align*}
    (\Psi_k)_{mn}=
    \begin{cases}
      \psi_k(n) \one_\ch & \quad \text{if } m=n, \\
      0 & \quad \text{else}.
    \end{cases}
  \end{align*}
  Since $\sum_{k\in\Z} h(t-k)=1$, we have $\sum_k\Psi_k=\one_{\cx}$ strongly. Note that $\sum_k\psi_k(n)^2\leq\sum_k\psi_k(n)=1$, and hence $\sum_k\|\Psi_kf\|_{\cx}^2\leq\|f\|_\cx^2$.

  \medskip
  \noindent
  {\bf Step 2: Magnitude of the localization error.}
  Set
  $$
  a:=\|A\|_{\J_s}.
  $$
  Since $[A,\Psi_k]_{mn}=A_{mn}(\psi_k(n)-\psi_k(m))$, we have, by \eqref{eq:lip} and $\min\{1,t\}\leq t^\beta$ for all $0<\beta\leq1$,
  $$
  \|[A,\Psi_k]_{mn}\|_{\cb(\ch)} \leq C_d a L^{-\beta} |n-m|^\beta (1+|n-m|)^{-s}, \quad 0<\beta\leq1.
  $$
  Since $s-\beta>d$, the scalar row and column sums of $\|[A,\Psi_k]_{nm}\|$ are bounded uniformly in $k$. Hence,
  \begin{align}
    \label{eq:comm}
    \|[A,\Psi_k]\|_{\cb(\cx)} \leq C_{d,s,\beta}aL^{-\beta}, \quad 0<\beta\leq1
  \end{align}
  follows by an operator-valued Schur test \cite[Lemma~3.3]{KohldorferBalazs2025W}; the latter states for $T=(T_{mn})_{m,n\in\Z^d}$ with $T_{mn}\in\cb(\ch)$ that
  $$
  \|T\|_{\cb(\cx)} \leq \left(\sup_m\sum_n\|T_{mn}\|_{\cb(\ch)}\right)^{1/2} \left(\sup_n\sum_m\|T_{mn}\|_{\cb(\ch)}\right)^{1/2},
  $$
  which follows from applying the scalar Schur test to the matrix of operator norms $(\|T_{mn}\|_{\cb(\ch)})_{m,n}$.

  \medskip
  \noindent
  {\bf Step 3: Decay of localization errors.}
  Let $P_j$ denote the orthogonal projection onto $\supp\psi_j$. If $|k-j|_\infty>4$, then the supports of $\psi_k$ and $\psi_j$ are disjoint, i.e., $\Psi_kP_j=0$ and $[A,\Psi_k]P_j=-\Psi_kAP_j$. Considering $[A,\Psi_k]P_j$, then every input site $n\in\supp\psi_j$ and output site $m\in\supp\psi_k$ satisfy $|n-m|\geq c_d L(1+|k-j|)$. There are at most $C_dL^d$ sites in either support. Thus, by an operator-valued Schur test,
  \begin{align}
    \|[A,\Psi_k]P_j\|_{\cb(\cx)}
    = \|\Psi_k A P_j\|_{\cb(\cx)}
    \leq C_{ds} a L^{d-s} (1+|k-j|)^{-s}, \quad |k-j|_\infty > 4.
  \end{align}
  On the other hand, by \eqref{eq:comm},
  \begin{align*}
    \|[A,\Psi_k]P_j\|_{\cb(\cx)} \leq C a L^{-\beta}, \quad |k-j|_\infty \leq 4.
  \end{align*}

  \medskip
  \noindent
  {\bf Step 4: Localization of the solution on a coarse block.}
  We now control the solution $f$ of $Af=g$ when localized using $\Psi_k$. Let us abbreviate
  $$
  B=A^{-1},
  \qquad
  b=\|B\|_{\cb(\cx)}
  $$
  so that $f=Bg$ and set
  $$
  u_k = \|\Psi_k f\|_{\cx}, \qquad v_k = b\|\Psi_kg\|_{\cx} = b\|\Psi_kAf\|_{\cx}.
  $$
  Since $\Psi_kf=BA\Psi_kf=B\left(\Psi_k Af + [A,\Psi_k]f\right)$, we have, by the triangle inequality,
  \begin{align}
    \label{eq:bound-uk}
    u_k \leq b\|A\Psi_k f\|
    \leq b\|\Psi_kAf\| + b\|[A,\Psi_k]f\|
    \leq v_k + b \sum_j\|[A,\Psi_k]\Psi_jf\|.
  \end{align}
  We claim that the last sum converges absolutely in norm for a suitably chosen $L$. Indeed, since $\Psi_jf$ is supported in $P_j\cx$, we have, by Step~3,
  \begin{align}
    \label{eq:scalarineq}
    u_k \leq v_k + \sum_{j\in\Z^d}V(k-j)u_j,
  \end{align}
  where $V:\Z^d\to[0,\infty)$ is the translation invariant majorant
  \begin{align}
    \label{eq:V}
    V(h) := C_0ab
    \begin{cases}
      L^{-\beta}, & \quad |h|_\infty\leq4, \\
      L^{d-s}(1+|h|)^{-s}, & \quad |h|_\infty > 4.
    \end{cases}
  \end{align}
  Thus, $\|V\|_{\ell^1} \lesssim ab L^{-\beta}$ since $\beta<s-d$. Since $u=(u_k)_{k\in\Z^d}\in\ell^2(\Z^d)$, we see that the sum in~\eqref{eq:bound-uk} indeed converges absolutely in norm. If we choose $L\in\{2,3,...\}$ large enough with
  \begin{align}
    \label{eq:L}
    L \sim_{d,s,\beta} (1+ab)^{1/\beta},
  \end{align}
  then
  \begin{align*}
    \|V\|_{\ell^1} \leq \frac12,
    \qquad 
    \sup_h(1+|h|)^s V(h) \lesssim ab L^{-\beta} \lesssim1.
  \end{align*}
  By Step~1, $u=(u_k)_{k\in\Z^d}$ and $v=(v_k)_{k\in\Z^d}$ are in $\ell^2(\Z^d)$. Iterating \eqref{eq:scalarineq} $N$ times yields
  \begin{align}
    u \leq \left(\sum_{n=0}^{N-1} V^{*n}\right) \ast v + V^{*N} \ast u
  \end{align}
  componentwise. Here, we write $V^{*n}$ for the $n$-fold convolution, defined iteratively by
  $$
  V^{*n}(k) = \left(V\ast V^{*(n-1)}\right)(k) = \sum_{\ell\in\Z^d}V(k-\ell) V^{*(n-1)}(\ell)
  $$
  with the conventions $V^{*1}=V$ and $V^{*0}=\delta_0$. Since $\|V^{*N}\ast u\|_{\ell^2}\leq 2^{-N}\|u\|_2$,
  \begin{align}
    \label{eq:Wv}
    u_k \leq \sum_j W(k-j)v_j, \qquad W=\sum_{n=0}^\infty V^{*n}.
  \end{align}
  In Lemma~\ref{lem:renewal} below, we show
  \begin{align}
    \label{eq:W}
    0\leq W(h) \leq C_{d,s,\beta} (1+|h|)^{-s}, \quad h\in\Z^d.
  \end{align}

  \medskip
  \noindent
  {\bf Step 5: From coarse blocks to unit blocks.}
  We now prove the claimed estimate for the matrix elements $B_{nm}$. Let $g$ be supported on a single site $m\in\Z^d$. Fix an output site $n\in\Z^d$. Then there are at most $2^d$ coarse indices $j$ with $\psi_j(m)\neq0$; likewise there are at most $2^d$ many coarse indices $k$ with $\psi_k(n)\neq0$. Decomposing $(Bg)_n=f_n=\sum_k\psi_kf_n$, we obtain, by \eqref{eq:Wv}--\eqref{eq:W},
  \begin{align}
    \|(Bg)_n\|_\ch
    \leq \sum_{k:\,\psi_k(n)\neq0}u_k
    \leq C b \sum_{\substack{k:\,\psi_k(n)\neq0 \\ j:\,\psi_j(m)\neq0}}(1+|k-j|)^{-s}\|g\|_{\ch}.
  \end{align}
  For such $k,j$ in the above sum, we have $|n-Lk|,|m-Lj|\leq C_d L$, so that $1+|k-j|\geq c_d(1+|n-m|/L)$. Hence,
  \begin{align}
    \|B_{nm}\| \lesssim_{d,s,\beta} b (1+|n-m|/L)^{-s} \lesssim b L^s (1+|n-m|)^{-s},
  \end{align}
  which was to be proved.
\end{proof}

We finally provide a lemma which implies \eqref{eq:W} above.

\begin{lemma}
  \label{lem:renewal}
  Let $s>0$ and $v:\Z^d\to[0,\infty)$ satisfy $\rho:=\|v\|_{\ell^1}<1$ and $D:=\sup_{h\in\Z^d}(1+|h|)^sv(h)<\infty$. Let $W=\sum_{n=0}^\infty v^{*n}$. Then,
  \begin{align}
    \label{eq:renewal}
    \sup_h(1+|h|)^sW(h) \leq 1+D\sum_{n=1}^\infty n^{s+1}\rho^{n-1}.
  \end{align}
  In particular, the right-hand side is $\lesssim_s 1+D$ if $\rho\leq1/2$.
\end{lemma}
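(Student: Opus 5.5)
The plan is to bound each convolution power $v^{*n}$ pointwise and then sum over $n$. The $n=0$ term is $\delta_0$, which contributes $1$ to the supremum. For $n\geq1$ I will show
\begin{align*}
  v^{*n}(h) \leq D\, n^{s+1}\rho^{n-1}(1+|h|)^{-s}, \quad h\in\Z^d.
\end{align*}
Summing over $n\geq1$ and adding the $n=0$ term then gives \eqref{eq:renewal}. The final claim is immediate: for $\rho\leq1/2$ the series $\sum_n n^{s+1}2^{-(n-1)}$ converges to a constant depending only on $s$.

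To prove the pointwise bound, write
\begin{align*}
  v^{*n}(h)=\sum_{h_1+\dots+h_n=h}\prod_{i=1}^n v(h_i).
\end{align*}
In each term, at least one summand is large: by the triangle inequality, $\max_i|h_i|\geq|h|/n$. Let $i_*$ be an index at which this maximum is attained. Then $1+|h_{i_*}|\geq (1+|h|)/n$. Split the sum according to which index $i\in\{1,\dots,n\}$ attains the maximum; this overcounts at most by a factor $n$.

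For a fixed such $i$, bound the factor $v(h_i)$ by
\begin{align*}
  v(h_i) \leq D(1+|h_i|)^{-s}\leq D\,n^s(1+|h|)^{-s}.
\end{align*}
The remaining $n-1$ factors are summed freely over $h_j$, $j\neq i$, with $h_i$ determined by $h_i=h-\sum_{j\neq i}h_j$. This free sum is at most $\|v\|_{\ell^1}^{n-1}=\rho^{n-1}$. Collecting the factor $n$ from the choice of $i$ gives
\begin{align*}
  v^{*n}(h)\leq n\cdot D n^s(1+|h|)^{-s}\rho^{n-1},
\end{align*}
which is exactly the claimed bound.

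The only point requiring care is the step $1+|h_{i_*}|\geq(1+|h|)/n$. It holds because
\begin{align*}
  n(1+|h_{i_*}|)\geq n+\textstyle\sum_j|h_j|\geq 1+|h|.
\end{align*}
Nonnegativity of $v$ is what makes the overcounting harmless, since every term dropped or duplicated in the splitting is nonnegative. I do not expect a genuine obstacle. The assumption $s>0$ (rather than $s>d$) suffices because no summation against $(1+|h|)^{-s}$ is ever performed; all the summability comes from $\rho<1$.
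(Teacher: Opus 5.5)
Your proposal is correct and matches the paper's proof essentially step for step. Both arguments pigeonhole an index with $|h_i|\geq |h|/n$ and take a union bound over the $n$ choices of index. They then extract $D n^s(1+|h|)^{-s}$ from that factor, sum the remaining $n-1$ factors freely to get $\rho^{n-1}$, and sum over $n$.
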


\begin{proof}
  Let us consider the $n$-th summand, i.e.,
  $$
  v^{*n}(h) = \sum_{\substack{h_1,...,h_n\in\Z^d \\ h_1+...+h_n=h}}v(h_1)v(h_2)...v(h_n).
  $$
  For $h\in\Z^d$ with $h=h_1+...+h_n$ there is, by pigeonholing, an index $j\in\{1,...,n\}$ such that $|h_j|\geq|h|/n$. For this $j$, we have
  \begin{align}
    \label{eq:renewal-aux1}
    v(h_j) \leq D(1+|h_j|)^{-s} \leq D n^s (1+|h|)^{-s}.
  \end{align}
  We now split the above $n$-fold sum according to the largest $|h_j|$. To this end, let
  \begin{align*}
    \mathcal A_j = \left\{(h_1,...,h_n):\, \sum_{i=1}^n h_i = h, \ |h_j|\geq\frac{|h|}{n}\right\}, \quad j\in\{1,...,n\}.
  \end{align*}
  Every tuple $(h_1,...,h_n)$ satisfying $\sum_{i=1}^n h_i=h$ belongs to at least one $\mathcal A_j$. Since $v\geq0$, we can therefore bound
  \begin{align*}
    v^{*n}(h) \leq \sum_{j=1}^n \sum_{(h_1,...,h_n)\in\mathcal A_j}\prod_{\ell=1}^n v(h_\ell).
  \end{align*}
  For the $j$-th summand, we have, by \eqref{eq:renewal-aux1},
  \begin{align*}
    \sum_{(h_1,...,h_n)\in\mathcal A_j}\prod_{\ell=1}^n v(h_\ell)
    \leq D n^s (1+|h|)^{-s} \sum_{(h_1,...,h_n)\in\mathcal A_j}\prod_{\ell\in\{1,...,n\}\setminus\{j\}} v(h_\ell).
  \end{align*}
  Since $h_j=h-\sum_{\ell\neq j}h_\ell$ and $v(h_j)$ is removed from the sum over $n$-tuples, we can sum freely over the other $n-1$ variables to get
  \begin{align*}
    \begin{split}
      \sum_{(h_1,...,h_n)\in\mathcal A_j}\prod_{\ell\in\{1,...,n\}\setminus\{j\}} v(h_\ell)
      & \leq \sum_{(h_k)_{k\neq j}\in(\Z^d)^{n-1}} \prod_{\ell\in\{1,...,n\}\setminus\{j\}} v(h_\ell) \\
      & = \prod_{\ell\neq j}\left(\sum_{k\in\Z^d}v(k)\right) = \rho^{n-1}.
    \end{split}
  \end{align*}
  Hence, we have shown
  \begin{align*}
    \sum_{(h_1,...,h_n)\in\mathcal A_j}\prod_{\ell=1}^n v(h_\ell)
    \leq D n^s (1+|h|)^{-s} \rho^{n-1}
  \end{align*}
  and thus
  \begin{align*}
    v^{*n}(h)
    \leq \sum_{j=1}^n D n^s (1+|h|)^{-s} \rho^{n-1}
    = Dn^{s+1}\rho^{n-1}(1+|h|)^{-s}.
  \end{align*}
  Summing over $n$ concludes the proof of \eqref{eq:renewal}.
\end{proof}


\section{Resolvent calculus and localized smoothing estimates for compactly supported multipliers}
\label{s:hs-calculus}

The goal of this section is to prove \eqref{eq:window-intro} following the strategy outlined in Subsection~\ref{ss:proofsketch}. We will reduce the proof of \eqref{eq:window-intro} to controlling $\one_{Q_a}(z-K)^{-1}\one_{Q_b}$ with $K=(H+1)^{-1}$ in $\J_{d+\alpha}$. To this end, we first record
\begin{align*}
  K=(H+1)^{-1}\in\J_{d+\alpha}, \qquad \|K\|_{2\to2}\leq1.
\end{align*}
Hence,
$$
\|u+iv-K\|_{\J_{d+\alpha}} \leq C, \qquad \|(u+iv-K)^{-1}\|_{2\to2}\leq|v|^{-1}, \quad u\in[-1,2], \quad 0<|v|\leq1.
$$
Thus, by Theorem~\ref{thm:QI}, with
\begin{align}
  \label{eq:indices2}
  \beta_* = \min\{1,\alpha\}/2, \qquad
  M_* = 1+(d+\alpha)/\beta_*,
\end{align}
we obtain,
\begin{align}
  \label{eq:res}
  \|(u+iv-K)^{-1}\|_{\J_{d+\alpha}} \leq C |v|^{-M_*}, \quad u\in[-1,2], \quad 0<|v|\leq1,
\end{align}
which is precisely \eqref{eq:key-intro2}. Using \eqref{eq:res} and the Helffer--Sj\"ostrand formula, we can then prove \eqref{eq:key-intro}, i.e., the following

\begin{lemma}
  \label{lem:HS}
  Let $J\subset(0,1)$ be compact. Then for every $L\in\N$ with $L>M_*$ there is $C_{J,L}<\infty$ such that
  $$
  \|\phi(K)\|_{\J_{d+\alpha}} \leq C_{J,L}\|\phi\|_{C^{L+1}(\R)}, \quad \phi\in C_c^\infty(J).
  $$
\end{lemma}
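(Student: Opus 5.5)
The plan is to represent $\phi(K)$ by the Helffer--Sj\"ostrand formula and estimate each matrix block $\one_{Q_a}\phi(K)\one_{Q_b}$ by integrating the bound \eqref{eq:res} against an almost-analytic extension of order $L$.

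\textbf{Almost-analytic extension.} Fix a compact neighborhood $J'\subset(0,1)$ of $J$ and a cutoff $\tau\in C_c^\infty(\R)$ with $\tau=1$ on $[-1/2,1/2]$ and $\supp\tau\subset[-1,1]$. Set
\begin{align*}
  \widetilde\phi(x+iy)=\sum_{k=0}^{L}\frac{\phi^{(k)}(x)(iy)^k}{k!}\,\tau(y).
\end{align*}
This function is supported in $J\times[-1,1]\subset[-1,2]\times[-1,1]$ and equals $\phi$ on the real axis. A standard telescoping computation gives
\begin{align*}
  |\bar\partial\widetilde\phi(x+iy)|\lesssim_L \|\phi\|_{C^{L+1}}|y|^L\one_{J}(x)\one_{[-1,1]}(y).
\end{align*}
The $\phi^{(L+1)}$ term produces the factor $|y|^L$. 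The terms involving $\tau'$ are supported in $1/2\le|y|\le1$, where they are bounded by $\|\phi\|_{C^L}\lesssim|y|^L\|\phi\|_{C^{L+1}}$.

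\textbf{Helffer--Sj\"ostrand representation.} Since $K$ is bounded and self-adjoint, the formula gives
\begin{align*}
  \phi(K)=\frac{1}{2\pi i}\int_\C\bar\partial\widetilde\phi(z)(z-K)^{-1}\,dz\wedge d\bar z.
\end{align*}
The integral converges in operator norm on $\cb(L^2)$, because $\|(z-K)^{-1}\|\le|\im z|^{-1}$ and $|\bar\partial\widetilde\phi|\lesssim|y|^L$ with $L\ge1$. Compressing by $\one_{Q_a}$ and $\one_{Q_b}$ commutes with the Bochner integral, since these are bounded operators. Hence, for all $a,b\in\Z^d$,
\begin{align*}
  \|\one_{Q_a}\phi(K)\one_{Q_b}\|\le\frac1\pi\int_{J}\int_{0<|v|\le1}|\bar\partial\widetilde\phi(u+iv)|\,\|\one_{Q_a}(u+iv-K)^{-1}\one_{Q_b}\|\,dv\,du.
\end{align*}
The real axis has measure zero and can be ignored.

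\textbf{Inserting the resolvent bound.} By \eqref{eq:res}, $\|\one_{Q_a}(u+iv-K)^{-1}\one_{Q_b}\|\le C(1+|a-b|)^{-d-\alpha}|v|^{-M_*}$ uniformly for $u\in[-1,2]$ and $0<|v|\le1$. Substituting this, the integrand is bounded by
\begin{align*}
  C\|\phi\|_{C^{L+1}}(1+|a-b|)^{-d-\alpha}|v|^{L-M_*}.
\end{align*}
This is integrable over $J\times[-1,1]$ precisely because $L>M_*$, so $L-M_*>0>-1$. Taking the supremum over $a,b$ of $(1+|a-b|)^{d+\alpha}\|\one_{Q_a}\phi(K)\one_{Q_b}\|$ gives the claim with $C_{J,L}$ depending only on $J$, $L$ and the constant in \eqref{eq:res}.

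\textbf{Expected difficulties.} The only delicate point is justifying the compressed integral and the uniformity of \eqref{eq:res}. Uniformity holds because Theorem~\ref{thm:QI} is applied with $\|u+iv-K\|_{\J_{d+\alpha}}\le C$ and $b=\|(u+iv-K)^{-1}\|\le|v|^{-1}$. The choice $\beta_*<\min\{1,\alpha\}$, with $s=d+\alpha$, then yields the power $|v|^{-1-(d+\alpha)/\beta_*}=|v|^{-M_*}$ once $|v|\le1$ is used to absorb the factor $1+Cb$. Everything else is routine.
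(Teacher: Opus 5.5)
Your proposal is correct and follows the paper's proof: the same order-$L$ almost-analytic extension, the Helffer--Sj\"ostrand formula, and insertion of \eqref{eq:res}, with integrability coming from the factor $|v|^{L-M_*}$ (in fact $L>M_*-1$ would already suffice). The only difference is technical. You take the Bochner integral in $\cb(L^2)$ and compress to each block $\one_{Q_a}\,\cdot\,\one_{Q_b}$ before using the entrywise form of \eqref{eq:res}, whereas the paper shows that the integral converges in $\J_{d+\alpha}$ itself, via the resolvent identity and continuity of multiplication in $\J_{d+\alpha}$; your route sidesteps that measurability argument.
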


\begin{proof}
  Let $\eta\in C_c^\infty((-1,1);[0,1])$ be a cutoff with $\eta(y)=1$ for $|y|\leq1/2$. We use this to construct the almost-analytic extension of $\phi$ at order $L$,
  $$
  \widetilde\phi(x+iy) = \eta(y)\sum_{j=0}^L\frac{(iy)^j\phi^{(j)}(x)}{j!},
  $$
  which is zero unless $|y|\leq1$ and $x$ is in a fixed compact subinterval of $(0,1)$. Then,
  $$
  |\bar\partial\widetilde\phi(x+iy)| \leq C_{J,L} \|\phi\|_{C^{L+1}} \begin{cases} |y|^L, & \quad |y|\leq1/2, \\ 1, & \quad |y|\geq1/2.
  \end{cases}
  $$
  We now claim that the Helffer--Sj\"ostrand formula
  \begin{align}
    \label{eq:helffersjostrand}
    \phi(K) = \frac{1}{2\pi i} \int_\C \bar\partial\widetilde\phi(z) (z-K)^{-1}\,dz\wedge d\bar z
  \end{align}
  is well-defined as an absolutely convergent Bochner integral in the Banach space $\J_{d+\alpha}$. Indeed, for $z,w\in\C\setminus\R$ in a compact subset of the resolvent set of $K$, the resolvent formula
  $$
  (z-K)^{-1}-(w-K)^{-1} = (w-z)(z-K)^{-1}(w-K)^{-1}
  $$
  and \eqref{eq:res}, together with continuity of multiplication in $\J_{d+\alpha}$, imply local $\J_{d+\alpha}$-norm continuity of $(z-K)^{-1}$ on the nonreal integration region of \eqref{eq:helffersjostrand}. The integrand $\bar\partial\widetilde\phi(z) (z-K)^{-1}$, defined arbitrarily on the Lebesgue-measure-zero line $\im z=0$, is therefore strongly Bochner measurable. The preceding estimates for $\bar\partial\widetilde\phi$ and $L>M_*-1$ thus make \eqref{eq:helffersjostrand} indeed an absolutely convergent Bochner integral in $\J_{d+\alpha}$. The continuous inclusion $\J_{d+\alpha}\hookrightarrow\cb(L^2)$ identifies the right-hand side of \eqref{eq:helffersjostrand} with $\phi(K)$ defined by the $L^2$-functional calculus.
\end{proof}

With Lemma~\ref{lem:HS} at hand, we can now show \eqref{eq:window-intro}. Recall from \eqref{eq:indices}, $S_*=\lceil M_*\rceil+5$ with $M_*$ defined as in \eqref{eq:indices2}.

\begin{theorem}
  \label{thm:window}
  Let $I\subset(0,\infty)$ be a compact interval, $s>S_*$, $\sigma>0$, $\rho=\sigma^{1/\alpha}$, and $m\in H^s(\R)$ be supported in $I$. Then for every $z\in\R^d$ and $j\in\N_0$,
  \begin{align}
    \label{eq:window}
    \|\one_{B(z,\rho)}m(\sigma H)\one_{A_j(z,\rho)}\|_{2\to q}
    \lesssim_{I,s,q} \rho^{-d(1/2-1/q)} 2^{-j(d/2+\alpha)} \|m\|_{H^s}.
  \end{align}  
  The same holds with $m(\sigma H)^*$ in place of $m(\sigma H)$.
\end{theorem}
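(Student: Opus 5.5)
Our plan is to reduce \eqref{eq:window} to the unit scale $\rho=1$ by scaling, and then to combine the factorization $m(H)=\me{-H}\phi(K)$ from Subsection~\ref{ss:proofsketch} with Lemma~\ref{lem:HS} and Proposition~\ref{prop:cubes}.

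\emph{Scaling.} Let $U_\rho f(x)=\rho^{-d/2}f(x/\rho)$. This is unitary on $L^2$. Since $H$ is homogeneous of degree $-\alpha$, we have $U_\rho^{*} H U_\rho=\rho^{-\alpha}H$ up to the orientation convention, and hence $m(\sigma H)=U_\rho m(H)U_\rho^{*}$. Conjugating the indicator functions maps $B(z,\rho)$ and $A_j(z,\rho)$ to $B(z/\rho,1)$ and $A_j(z/\rho,1)$. Converting the $L^q$ norm under dilation produces exactly the factor $\rho^{-d(1/2-1/q)}$. It therefore suffices to prove
\[
\|\one_{B(z,1)}m(H)\one_{A_j(z,1)}\|_{2\to q}\lesssim 2^{-j(d/2+\alpha)}\|m\|_{H^s},
\]
uniformly in $z$. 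The adjoint case follows the same way with $\overline m$, because $m(\sigma H)^*=\overline m(\sigma H)$.

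\emph{Unit scale.} First approximate $m$ by smooth functions supported in a slightly larger compact $I'$. By Sobolev embedding, $s>S_*=\lceil M_*\rceil+5$ allows us to pick an integer $L>M_*$ with $L+1<s-1/2$. This gives $\|m\|_{C^{L+1}}\lesssim\|m\|_{H^s}$, so the final estimate passes to the limit. Write $m(H)=\me{-H}\phi(K)$ with $\|\phi\|_{C^{L+1}}\lesssim_I\|m\|_{C^{L+1}}$. Lemma~\ref{lem:HS} then gives $\|\one_{Q_c}\phi(K)\one_{Q_b}\|_{2\to2}\lesssim\|m\|_{H^s}(1+|c-b|)^{-d-\alpha}$. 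Proposition~\ref{prop:cubes} gives $\|\one_{Q_a}\me{-H}\one_{Q_c}\|_{2\to q}\lesssim(1+|a-c|)^{-d-\alpha}$. Composing over the unit cubes,
\[
\|\one_{Q_a}m(H)\one_{Q_b}\|_{2\to q}\le\sum_{c}\|\one_{Q_a}\me{-H}\one_{Q_c}\|_{2\to q}\|\one_{Q_c}\phi(K)\one_{Q_b}\|_{2\to2}\lesssim\|m\|_{H^s}(1+|a-b|)^{-d-\alpha}.
\]
The last step uses that $(1+|\cdot|)^{-d-\alpha}$ is convolution-submultiplicative on $\Z^d$, as $d+\alpha>d$. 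The sum over $c$ converges in operator norm, which justifies inserting $\sum_c\one_{Q_c}=1$.

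\emph{Summing the cubes.} The ball $B(z,1)$ meets $O(1)$ cubes $Q_a$. The annulus $A_j(z,1)$ meets $O(2^{jd})$ cubes $Q_b$, all with $|a-b|\gtrsim 2^j$ when $j\ge 3$; the cases $j\le2$ are handled by the global bound $\|m(H)\|_{2\to q}\lesssim\|m\|_{H^s}$ obtained from the same composition. Take $f$ supported in $A_j$ and write $f=\sum_b f\one_{Q_b}$. Then
\[
\|\one_{Q_a}m(H)f\|_q\lesssim 2^{-j(d+\alpha)}\sum_b\|f\one_{Q_b}\|_2\le 2^{-j(d+\alpha)}2^{jd/2}\|f\|_2
\]
by Cauchy--Schwarz. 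This yields exactly $2^{-j(d/2+\alpha)}$.

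\emph{Main obstacle.} The delicate point is the scaling step: we must check that $H$ truly satisfies the dilation identity with $\rho^{-\alpha}$ and that $K$ is defined at unit scale only, so that no $\rho$-dependence leaks into the cube estimates. Once that is verified, the remaining steps are bookkeeping of constants depending on $I$, $s$ and $q$.
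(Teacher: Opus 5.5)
Your proposal is correct and follows the paper's proof essentially step by step. It scales to $\rho=1$, reduces to smooth $m$ via $H^s\hookrightarrow C^{L+1}$, factorizes $m(H)=\me{-H}\phi(K)$, and composes Lemma~\ref{lem:HS} with Proposition~\ref{prop:cubes} over unit cubes. It then passes from cubes to the annulus by Cauchy--Schwarz and handles small $j$ by the global $L^2\to L^q$ bound. One cosmetic point: the threshold $j\ge 3$ for $|a-b|\gtrsim 2^j$ really depends on $d$. Alternatively, note that $1+|a-b|\gtrsim_d 2^j$ holds for all $j\in\N_0$; either way the argument is unaffected.
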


\begin{proof}
  First, by the homogeneity of $H$, it suffices to consider $\rho=\sigma=1$. Moreover, we will just prove the bound for $m(H)$ as the proof for the adjoint follows from $m(H)^*=\overline m(H)$ and $\|\overline m\|_{H^s}=\|m\|_{H^s}$. Furthermore, since $s>S_*$, we have $H^s(\R)\hookrightarrow C^{L+1}(\R)$ for $L=\lceil M_*\rceil+2$. Hence, by an approximation argument, it suffices to consider $m\in C^{L+1}(\R)$, or even just $C^\infty(\R)$.

  \medskip
  \noindent
  {\bf Step 1: Finitely many small $j$.}
  For finitely many small $j$, \eqref{eq:window} follows from the factorization $m(H)=\me{-H}(\me{H}m(H))$, the bound $\|\me{H}m(H)\|\lesssim_I\|m\|_\infty$, and the $L^2\to L^q$-boundedness of $\me{-H}$ (Proposition~\ref{prop:cubes}). Indeed, we then get
  \begin{align*}
    \|m(H)\|_{2\to q} \lesssim_I \|m\|_\infty \lesssim_{I,s}\|m\|_{H^s}.
  \end{align*}
  Thus, it remains to consider $j\gtrsim1$.

  \medskip
  \noindent
  {\bf Step 2: Factorization of $m$.}
  Let $K=(H+1)^{-1}$. To apply Lemma~\ref{lem:HS}, we factorize $m(H)=\me{-H}\phi(K)$ with
  $$
  \phi(\lambda) =
  \begin{cases}
    \exp(\lambda^{-1}-1) m(\lambda^{-1}-1), & \quad \lambda\in(0,1], \\
    0, & \quad \lambda=0.
  \end{cases}
  $$
  If $m\in C_c^\infty(I)$, then $\phi\in C_c^\infty(J)$ for a fixed compact $J\subset (0,1)$ determined by $I$. The map $m\mapsto\phi$ is smooth on the relevant compact sets, and $\|\phi\|_{C^{L+1}}\lesssim_I \|m\|_{C^{L+1}}$. Hence, by Lemma~\ref{lem:HS},
  \begin{align}
    \label{eq:JC}
    \|\phi(K)\|_{\J_{d+\alpha}} \leq C_I \|m\|_{C^{L+1}}.
  \end{align}

  \medskip
  \noindent
  {\bf Step 3: Localized smoothing bound for $m$.}
  With the above decomposition of $m$ and the localized $L^2\to L^q$-bounds for $\me{-H}$ in Proposition~\ref{prop:cubes}, we obtain
  \begin{align}
    \label{eq:window-cubes}
    \begin{split}
      \|\one_{Q_a}m(H)\one_{Q_b}\|_{2\to q}
      & \leq \sum_{c\in\Z^d} \|\one_{Q_a}\me{-H}\one_{Q_c}\|_{2\to q} \|\one_{Q_c}\phi(K)\one_{Q_b}\|_{2\to2} \\
      & \lesssim \|\phi(K)\|_{\J_{d+\alpha}} \sum_{c\in\Z^d}(1+|a-c|)^{-d-\alpha}(1+|c-b|)^{-d-\alpha} \\
      & \lesssim \|m\|_{C^{L+1}} (1+|a-b|)^{-d-\alpha}.
    \end{split}
  \end{align}

  \medskip
  \noindent
  {\bf Step 4: From cubes to annuli.}
  We now cover $B(z,1)$ and $A_j(z,1)$ with unit cubes. Clearly, $B(z,1)$ intersects at most $C_d$ many unit cubes, which we collect in a family. Likewise, $A_j(z,1)$ intersects at most $C_d2^{jd}$ many unit cubes, which we collect in another family, say $\mathcal B_j$. We use these two families of unit cubes to cover $B(z,1)$ and $A_j(z,1)$, respectively. Since we can assume $j\gtrsim1$ to be sufficiently large by Step~1, the distance between the cubes of the two families is of order $2^j$. Hence, by \eqref{eq:window-cubes} and Cauchy--Schwarz,
  \begin{align*}
    \|\one_{B(z,1)}m(H)\one_{A_j(z,1)}f\|_q
    & \lesssim \|m\|_{H^s} 2^{-j(d+\alpha)} \sum_{b\in\mathcal B_j}\|\one_{Q_b}\one_{A_j(z,1)}f\|_2 \\
    & \lesssim \|m\|_{H^s} 2^{-j(d+\alpha)}2^{jd/2}\|f\|_2.
  \end{align*}
  This concludes the proof.
\end{proof}

\section*{Acknowledgments}
I wish to thank Volker Bach, The Anh Bui, Jean-Claude Cuenin, and Atsuhide Ishida for valuable discussions.


\newcommand{\etalchar}[1]{$^{#1}$}
\def\cprime{$'$}

\end{document}